\newtheorem{theorem}{Theorem}
\newtheorem{lemma}{Lemma}
\newtheorem{definition}{Definition}
\newtheorem{claim}{Claim}
\newtheorem{remark}{Remark}
\def\R{\mathbb{R}}
\def\N{\mathbb{N}}
\def\S{\mathcal{S}}
\title{Strong spatial mixing of $q$-colorings on Bethe lattices}
\author{Qi Ge\thanks{ Department of Computer Science, University
of Rochester, Rochester, NY 14627.  Email: \{qge,stefanko\}@cs.rochester.edu.
Research supported, in part, by NSF grant CCF-0910415.} \and Daniel
\v{S}tefankovi\v{c}$^*$}
\begin{document}

\maketitle

\begin{abstract}
We investigate the problem of strong spatial mixing of $q$-colorings on Bethe lattices.
By analyzing the sum-product algorithm we establish the strong spatial mixing of $q$-colorings on $(b+1)$-regular
Bethe lattices, for $q \geq 1+\lceil 1.764b \rceil$. 
We also establish the strong spatial mixing of $q$-colorings on binary trees, for $q=4$.
\end{abstract}

\section{Introduction}

A $q$-coloring of a graph $G=(V,E)$  is a function $\sigma: V \to
[q]$ such that no edge is monochromatic (that is, for $\{u,v\}\in E$ we
have $\sigma(u) \neq \sigma(v)$). A measure $p$ on the set of
$q$-colorings of an infinite graph $G$ is an {\em infinite-volume
Gibbs measure} if for every finite region $R$, and for any
$q$-coloring $\sigma$ of $G$, the conditional probability
distribution $p(\cdot\,|\,\sigma(V \setminus R))$ is the uniform
distribution on $q$-colorings of $R$. It is known that there is at
least one infinite-volume Gibbs measure for any graph $G$. One
problem of interest in statistical physics
(c.f.~\cite{MR1746301}) is whether an infinite-volume Gibbs
measure has {\em strong spatial mixing}.

Given a $q$-coloring $\sigma$ and a set of vertices $U \subseteq V$, let $\sigma_U$ be the $q$-coloring
restricted to $U$. Given a measure $p$, a vertex $v \not\in U$, and a (partial) $q$-coloring $\sigma_U$, let $p_v^{\sigma_U}$ be
the marginal distribution on the colors of $v$ conditioned on $\sigma_U$. Let $\mathrm{dist}(u,v)$ be
the distance between $u,v$ in $G$, and let $\mathrm{dist}(v,U)=\min_{u\in U}\mathrm{dist}(v,u)$.

The definition of strong spatial mixing we use is from~\cite{MR2277139} and~\cite{weitzthesis} (we state the
definition only for colorings).

\begin{definition}\label{def:ssm}
Let $\delta:\N \to \R^+$. The infinite-volume Gibbs measure $p$ on $q$-colorings of $G=(V,E)$ has strong spatial mixing with rate $\delta(\cdot)$ if and only if for every vertex $v$, every $U \subseteq V$, and every pair of $q$-colorings $\sigma_U,\phi_U$,
$$
|p_v^{\sigma_U}-p_v^{\phi_U}| \leq \delta(\mathrm{dist}(v,\Delta)),
$$
where $\Delta \subseteq U$ is the set of vertices on which $\sigma_U$ and $\phi_U$ differ.
\end{definition}

Recently, strong spatial mixing received attention because of its connection with efficient approximation algorithms for certain spin systems (c.f.~\cite{MR2277139,NT07}). For colorings of graphs, strong spatial mixing results were established for different lattice graphs
~\cite{MR2191453, MR2570925}.

A Cayley tree (also known as Bethe lattice) $\widehat{T}^b$ is an infinite $(b+1)$-regular tree.
In this paper, we prove the strong spatial mixing for $q$-colorings on Cayley trees $\widehat{T}^b$.

\begin{theorem}\label{thm:main}
For $q \geq 1+\lceil cb \rceil$ where $c \approx 1.764$ is the root of $c=\exp(1/c)$, the infinite-volume Gibbs measure $p$ on $q$-colorings of $\widehat{T}^b$ has strong spatial mixing with rate $\delta(d)=C \exp(-a d)$ for some positive constants $C$ and $a$.
\end{theorem}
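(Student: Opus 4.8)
The plan is to analyze the sum--product (belief propagation) recursion on the tree. Root $\widehat{T}^b$ at the vertex $v$; then $v$ has $b+1$ pendant subtrees, and inside each of them every non-leaf vertex has exactly $b$ children. For a vertex $u$ and a boundary condition, let $R_u=(R_u(1),\dots,R_u(q))$ denote the marginal law of the color of $u$ in the finite subtree hanging from $u$, conditioned on the boundary; I will first work with finite truncations and pass to the limit at the end. Then $R_u$ is the point mass at $\sigma_U(u)$ when $u\in U$, and otherwise
$$
R_u(c)=\frac{\prod_{i=1}^{b}\bigl(1-R_{u_i}(c)\bigr)}{\sum_{c'=1}^{q}\prod_{i=1}^{b}\bigl(1-R_{u_i}(c')\bigr)},
$$
where $u_1,\dots,u_b$ are the children of $u$; the marginal $p_v$ is given by the same formula with the product over the $b+1$ neighbours of $v$. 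Because the pendant subtrees are conditionally independent, the set of vertices at which $R_u$ differs between $\sigma_U$ and $\phi_U$ lies in the union of the tree-paths from $v$ to $\Delta$; such a path cannot pass through a vertex of $U\setminus\Delta$ (there the message is the same fixed point mass under both conditions), so every leaf of this ``disagreement subtree'' is at distance at least $d=\mathrm{dist}(v,\Delta)$ from $v$.

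The first technical ingredient is an a priori near-uniformity estimate for messages at internal vertices. Using $1-y\ge \exp\!\bigl(-y/(1-y)\bigr)$ together with Jensen's inequality one lower-bounds the normalizing constant by something of the form $Z\ge q\exp\!\bigl(-\tfrac{b}{q(1-x)}\bigr)$ whenever $\max_c R_{u_i}(c)\le x<1$ for every child, and combining this with the trivial numerator bound gives $\max_c R_u(c)\le \tfrac1q\exp\!\bigl(\tfrac{b}{q(1-x)}\bigr)$. Iterating this scalar inequality, started from the crude bound $\max_c R_u(c)\le 1/(q-b)$ that already holds one step away from any boundary, shows that internal messages satisfy $\max_c R_u(c)\le \bar\lambda$, where $\bar\lambda$ is the relevant fixed point; as $q/b\to c$ one has $\bar\lambda\sim e^{b/q}/q$, and the inequality $b\bar\lambda<1$ that the contraction below will need is precisely $q/b>e^{b/q}$, i.e. $q>cb$ with $c=e^{1/c}$. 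This is exactly where the constant $c\approx 1.764$ appears, and why the elementary estimate $Z\ge q-b$ (which only yields $q\gtrsim 2b$) must be improved.

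The second ingredient is the contraction itself. I would introduce a potential $\Phi$ on pairs of messages (total variation, or a mildly reweighted variant chosen so no spurious multiplicative constant appears) and prove a one-step Lipschitz bound: holding the messages of all children of $u$ but one fixed, $\Phi(R_u,R_u')\le \bigl(\max_c R_u(c)\bigr)\cdot\Phi(R_{u_1},R_{u_1}')$ up to lower-order corrections, the factor being $\max_c R_u(c)\le\bar\lambda$. Differentiating the recursion, $\tfrac{dR_u(c)}{d\epsilon}=R_u(c)\bigl(\bar\eta-\eta_c/(1-R_{u_1}(c))\bigr)$ with $\bar\eta=\sum_d R_u(d)\eta_d/(1-R_{u_1}(d))$; since $\sum_d\eta_d=0$, $\bar\eta$ is the $R_u$-weighted mean of $c\mapsto\eta_c/(1-R_{u_1}(c))$, and bounding the weighted mean absolute deviation by the range of that function is what produces the clean factor. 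Summing over the at most $b$ children carrying disagreement at each branch point and telescoping up the disagreement subtree -- the total influence transmitted to a parent being at most $b\bar\lambda$ times the maximum over its children -- yields $|p_v^{\sigma_U}-p_v^{\phi_U}|\le C\,(b\bar\lambda)^{d-O(1)}$, i.e. strong spatial mixing with $\delta(d)=C\exp(-ad)$, $a=-\log(b\bar\lambda)>0$, for $q\ge 1+\lceil cb\rceil$ (the ``$+1$'' absorbing the slack between ``$b\bar\lambda<1$'' and the finitely many boundary-adjacent vertices where only the crude bound is available).

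I expect the main obstacle to be making these two estimates simultaneously tight enough to reach $c\approx1.764$: one must both replace $Z\ge q-b$ by the exponential lower bound and carry out the one-step contraction without losing the factor of two from the ``mean absolute deviation versus range'' step, so that the per-edge factor really is of order $\max_c R_u(c)$ and not a fixed constant times it. A secondary, more routine, issue is the interaction of the disagreement subtree with vertices of $U\setminus\Delta$ near $v$ and the passage from finite truncations to the infinite-volume Gibbs measure; both are handled by the observations that such boundary vertices merely truncate the relevant paths and that all the bounds are uniform in the truncation level.
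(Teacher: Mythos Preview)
Your high-level strategy---establish an a priori near-uniformity bound on the marginals, then show the recursion contracts in a suitable norm with per-step factor roughly $b\cdot\max_c R_u(c)$---is exactly the paper's strategy, and your identification of the constant $c=e^{1/c}$ as the threshold where $b\bar\lambda=1$ is correct asymptotically.  However, the proposal remains a plan rather than a proof, and the step you flag as the ``main obstacle'' is in fact the entire content of the argument.  Concretely, the claim that the per-child Lipschitz factor is $\max_c R_u(c)$ ``up to lower-order corrections'' is not established: writing $v_c=\eta_c/(1-R_{u_1}(c))$ and bounding the $R_u$-weighted mean absolute deviation of $v$ by its range gives a factor involving both $\max_c R_u(c)$ and $1/(1-\max_c R_{u_1}(c))$ together with a factor~$2$ from the positive/negative split, and naively combining these yields a threshold near $q\approx 2cb$ rather than $cb$.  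You are aware of this, but you do not supply the mechanism that removes the loss.

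The paper resolves precisely this point, and it is worth seeing how.  It works with the ``anti-marginal'' messages $\alpha_i=(1-R(i))/(q-1)$, for which after a single step one already has the sharp box $\alpha_i\in[\tfrac{1}{q-1}(1-\tfrac{1}{q-b}),\tfrac{1}{q-1}]$ (no fixed-point iteration is needed).  The one-child contraction (Lemma~\ref{lem:ineqb}) is proved by a path argument: one differentiates along the segment $(1-t)\alpha+t\beta$, reduces to $t=0$, and shows that the signed linear functional of the perturbation is controlled by $\|\alpha-\beta\|_1$ via the algebraic inequality $\max_k\tau_k-\min_k\tau_k\le 2A/(q-1)^{b-1}$ (Claim~\ref{clm:tau}).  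This claim is where the factor of~$2$ is absorbed; its proof exploits that the worst sign pattern and the bound $z_j\le 1/(q-1)^{b-1}$ combine to give exactly $2A/(q-1)^{b-1}$, matching the $2D$ on the other side.  The sharp lower bound on the normalizer (Lemma~\ref{lem:prodnew}) then replaces your Jensen/iteration step by a combinatorial ``spread the large entries evenly'' argument, yielding the explicit contraction factor $\tfrac{b}{q}(1-\tfrac{1}{q-b})^{-b+b^2/q}$, which is finally shown to be~$<1$ for all $b\ge 2$ when $q\ge 1+\lceil cb\rceil$ by a direct (and somewhat delicate) monotonicity computation.  Your Jensen fixed-point route may well be made to work, but as written it is only asymptotic, and without an analogue of Claim~\ref{clm:tau} the contraction step does not close at the stated constant.
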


We also establish the strong spatial mixing of $q$-colorings on binary trees, for $q=4$.

\begin{theorem}\label{thm:main4}
Let $q = 4$. The infinite-volume Gibbs measure $p$ on $q$-colorings of $\widehat{T}^2$ has strong spatial mixing with rate $\delta(d)=C \exp(-a d)$ for some positive constants $C$ and $a$.
\end{theorem}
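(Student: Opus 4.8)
The plan is to reduce strong spatial mixing to a one‑step contraction estimate for the sum‑product (belief‑propagation) recursion on the tree, and then to establish that contraction at the critical pair $(q,b)=(4,2)$ by a sharpened analysis of the marginal vectors that can actually occur along the tree. First I would set up the recursion. Working in each of the $b+1$ subtrees hanging off $v$, for a vertex $u$ with children $u_1,\dots,u_b$ and a boundary condition imposed on a finite set below $u$, the marginal $\vec{p}_u\in\R^q$ of the color of $u$ in the subtree below $u$ satisfies
\[
 p_u(i)\;=\;\frac{\prod_{j=1}^{b}\bigl(1-p_{u_j}(i)\bigr)}{\sum_{k=1}^{q}\prod_{j=1}^{b}\bigl(1-p_{u_j}(k)\bigr)},
\]
a pinned vertex contributing an indicator vector and, since $q=4\ge b+2$, the free boundary at infinity contributing the symmetric vector in the limit (this is where one invokes that the relevant Gibbs measure is well defined and computed by the recursion). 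Writing $F$ for this map, $p_v^{\sigma_U}-p_v^{\phi_U}$ telescopes along the \emph{unique} path from $v$ to the disagreement set $\Delta$: branches with no disagreement contribute identical vectors, while along the path each application of $F$ attenuates the influence of $\Delta$ by the local contraction factor. Hence it suffices to produce a metric $d$ on the simplex and a constant $\gamma<1$ with $d(F(\vec{p}_1,\dots,\vec{p}_b),F(\vec{p}_1',\dots,\vec{p}_b'))\le \gamma\sum_j d(\vec{p}_j,\vec{p}_j')$ over all feasible inputs; converting a level‑wise factor into a rate then yields $\delta(d)=Ce^{-ad}$. Note that Theorem~\ref{thm:main} already gives $q\ge 5$ for $\widehat{T}^2$, so the whole content here is the boundary case $q=4$, $b=2$, which sits essentially at the Gibbs‑uniqueness threshold $q=b+2$ and where the generic estimate underlying Theorem~\ref{thm:main} is not strong enough.

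The contraction is proved by bounding the Jacobian of $F$. I would use a potential‑function metric: choose a potential $\Phi$ on the simplex and let $d(\vec p,\vec p')$ be the length of $\vec p\mapsto\vec p'$ measured through $\Phi$, tuning $\Phi$ so that the worst‑case norm of the Jacobian of the $\Phi$‑conjugated map is minimized. The essential point that makes $q=4$ reachable is that the inputs are not arbitrary probability vectors: being outputs of $F$ on a subtree, each $\vec p_{u_j}$ obeys refined coordinate bounds — from $\sum_k\prod_j(1-p_{u_j}(k))\ge q-b$ one gets $p_{u_j}(i)\le 1/(q-b)$, which can be fed back into the recursion to shrink the feasible region further — and a pinned child contributes a $0/1$ vector. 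The hardest configurations for contraction are those with a (nearly) frozen child, which effectively removes a color from a subtree and pushes the number of available colors on the 3‑regular tree down toward $3$; one must verify that even there the product of the per‑coordinate sensitivities, weighted by $\Phi$, stays strictly below $1$.

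I expect this boundary behavior to be the main obstacle. Because $q=4$, $b=2$ has no slack, I anticipate that a single application of $F$ need not contract for every feasible input, and that one must amortize over two (or a bounded number of) consecutive levels — showing $F\circ F$ contracts where $F$ alone does not — which in turn requires nailing down the sharp feasible region for the vectors entering the second level. The resulting inequality will be the optimization of an explicit smooth function over a compact, semialgebraic region of feasible marginals; since at the critical parameters the margin is presumably tiny and no clean closed form is likely, I would close it by a rigorous numerical argument (interval arithmetic together with a Lipschitz/compactness bound over a finite mesh). Everything else — the telescoping along the path to $\Delta$, the justification that the infinite tree is captured by the recursion, and the passage from a level‑wise contraction constant to the exponential rate $\delta(d)=Ce^{-ad}$ — is routine once the contraction estimate is established.
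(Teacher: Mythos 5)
Your high-level framework (belief-propagation recursion, closure of the feasible set under the map, reduction of SSM to a one-step contraction on the tree) matches the paper's, and your instinct that the $(q,b)=(4,2)$ case requires sharper feasibility bounds on the messages than the crude $\mathcal{S}_1$ box is also correct: the paper introduces $\mathcal{S}'_1$ with exactly such refinements (for example $1/6\le\gamma_i\le 1/3$, and, when $\gamma_i\ne 1/3$, $\gamma_i\le 11/36$). However, you have not identified the observation that actually carries the proof, and without it your plan is unlikely to close as written.

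The missing idea is a \emph{coupling} between the two messages $\alpha,\beta$ coming from the same child under the two boundary conditions $\sigma_U,\phi_U$: under the hypothesis $\mathrm{dist}(w,\Delta)\ge 3$, one has $\alpha_i=1/(q-1)$ if and only if $\beta_i=1/(q-1)$, because $\alpha_i$ takes the extreme value exactly when some grandchild is pinned to color $i$, and at distance $\ge 1$ from $\Delta$ the two boundary conditions pin the same colors. You correctly anticipate that a single application of $F$ "need not contract for every feasible input"; the paper's resolution is not to pass to $F\circ F$ or to a tuned potential, but to observe that the inputs for which one-step contraction is tight (a child message equal to a permutation of $(0,1/3,1/3,1/3)$ or $(1/6,1/6,1/3,1/3)$, which push $\sum_i\gamma_i\xi_i$ down to $2/9$) cannot occur simultaneously with $\alpha\ne\beta$ once coupling is imposed; when $\gamma$ has at most one extreme coordinate one gets $\sum_i\gamma_i\xi_i\ge 49/216$, hence a per-child factor $24/49<1/2$ and an overall one-step $\ell_1$ contraction by $48/49$. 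This makes the argument entirely elementary and exact, with explicit constants, and avoids the interval-arithmetic/mesh step you propose as a fallback. Two further issues in your writeup: the influence of $\Delta$ does not telescope along a single path in general (disagreements may lie in several subtrees of a node, so a per-child factor strictly below $1/b$ — equivalently a $\max$-over-children factor below $1$ — is what one needs), and a potential/Jacobian-conjugation framework is not used anywhere; the paper works directly in the $\ell_1$-norm, with the refined feasible region and the coupling doing all the work.
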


We will prove Theorem~\ref{thm:main} and Theorem~\ref{thm:main4} by analyzing the sum-product algorithm, which we review in the next section.

\section{The sum-product algorithm}

Let $T=(V,E)$ be a $b$-ary tree, $U \subseteq V$ be a subset of vertices, and $\sigma_U: U \to [q]$ be a $q$-coloring on
the vertices in $U$. For every vertex $v \in V$, a message (according to $\sigma_U$) from $v$ to its parent is a probability
distribution $\alpha \in \R^q$ on $[q]$ where $\alpha_i$ is proportional to the number of $q$-colorings of the subtree
rooted at $v$ such that the color of $v$ is different from $i$. The message from $v$ to its parent can also be defined
recursively as follows.
\begin{itemize}
\item If $v \in U$ and $\sigma_U(v)=k$ for some $k \in [q]$, then for $i \in [q]$,
$$\alpha_i = \left\{
\begin{array}{ll}
0, &\mbox{if } i=k,\\
1/(q-1), &\mbox{otherwise.}
\end{array}
\right.$$ \item If $v \in V \setminus U$ and $v$ is a leaf, then for $i \in [q]$, $\alpha_i = 1/q$. \item If $v \in V \setminus
U$ and $v$ is not a leaf, let $\beta^\ell$, $\ell \in [b]$, be the message from the $\ell$-th child of $v$ to $v$. Then $\alpha =
f(\beta^1,\ldots,\beta^b)$, where $f: (\R^{q})^b \to \R^q$ is defined by
\begin{equation}\label{eq:mproc}
\left(f(\beta^1,\ldots,\beta^b)\right)_i = \frac{\sum_{j \in [q], j \neq i} \prod_{\ell=1}^b \beta^\ell_j}{(q-1)\sum_{j \in [q]} \prod_{\ell=1}^b \beta^\ell_j}.
\end{equation}
\end{itemize}
Note that the right-hand side of (\ref{eq:mproc}) is always bounded by $1/(q-1)$ and
hence all messages are in the set $\S_1$, where $\S_1$ is the set of vectors $\gamma \in \R^q$ satisfying
\begin{equation}\label{eq:ppty1}
\sum_{i=1}^q \gamma_i=1, \quad\mbox{and}\quad 0 \leq \gamma_i \leq \frac{1}{q-1}, \mbox{ for all }i \in [q].
\end{equation}

The following folklore result gives a connection between strong spatial mixing and
the sum-product algorithm.

\begin{lemma}\label{lem:defequiv}
Assume that there exists a function $\delta$ such that for every $b$-ary tree $T=(V,E)$ (with root $r$), for any subset of
vertices $U\subseteq V$, and any pair of configurations $\sigma_U,\phi_U: U \to [q]$, the message $\alpha$ from $u$ (a child of
$r$) to $r$ according to $\sigma_U$ and the message $\beta$ from $u$  to $r$ according to $\phi_U$ satisfy
\begin{equation}\label{erat}
\| \alpha-\beta \| \leq \delta({\rm dist}(r,\Delta)),
\end{equation}
where $\|\cdot\|$ is some fixed norm and $\Delta\subseteq U$ is a set where $\sigma_U$ and
$\phi_U$ differ. Then the infinite-volume Gibbs measure $p$ on $q$-colorings of $\widehat{T}^b$
has strong spatial mixing with rate $C\delta(d)$ for some positive constant $C$ (the constant $C$ depends on $b$ and
the norm used).
\end{lemma}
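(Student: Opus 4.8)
The plan is to run the sum--product recursion directly on $\widehat{T}^b$, rooted at the target vertex $v$, and to feed the hypothesis~(\ref{erat}) into the last step of the recursion. Fix $v$, $U$, $\sigma_U$, $\phi_U$, and let $\Delta$ and $d=\mathrm{dist}(v,\Delta)$ be as in Definition~\ref{def:ssm}; the case $d=0$ is absorbed into the constant $C$, so assume $d\ge 1$. The first step is the (folklore) reduction from the infinite-volume measure to finite trees. Rooting $\widehat{T}^b$ at $v$ and letting $T_n$ denote the ball of radius $n$ around $v$, the infinite-volume Gibbs property of $p$ together with the spatial Markov property of the tree --- conditioned on the colors on the sphere $\partial T_n$, the interior of $T_n$ is independent of everything outside --- expresses $p_v^{\sigma_U}$ as a mixture, over colorings of $\partial T_n$, of the uniform-coloring marginals at $v$ in $T_n$ with those boundary colors together with $\sigma_{U\cap T_n}$ imposed; each such marginal is exactly the quantity computed by the sum--product algorithm described above applied to $T_n$ with pinned set $(U\cap T_n)\cup\partial T_n$. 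Letting $n\to\infty$, it therefore suffices to bound, uniformly over $n$ and over the common boundary coloring --- which is pinned in both the $\sigma$- and the $\phi$-run and hence contributes nothing to $\Delta$ --- the discrepancy at $v$ produced by the two runs.

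For that, observe that $v$ has $b+1$ children $u_1,\dots,u_{b+1}$ in the rooted tree, and that for each $\ell$ the subtree hanging below $u_\ell$, with $v$ adjoined as its parent, is a $b$-ary tree to which the hypothesis applies with $v$ in the role of the root $r$ and $u_\ell$ in the role of its child $u$. Writing $\gamma^\ell$ and $\psi^\ell$ for the messages from $u_\ell$ to $v$ in the $\sigma$- and $\phi$-run, respectively, and noting that the part of $\Delta$ inside this subtree is at distance at least $d$ from $v$, (\ref{erat}) yields $\|\gamma^\ell-\psi^\ell\|\le\delta(d)$ for every $\ell$ (here we assume, harmlessly, that $\delta$ is nonincreasing --- otherwise replace it by $d\mapsto\sup_{d'\ge d}\delta(d')$, which for the exponentially decaying $\delta$ relevant to the applications changes nothing up to constants). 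On the other hand, since $\gamma^\ell_i$ is proportional to the number of colorings of the $\ell$-th subtree in which $u_\ell\ne i$, the marginal at $v$ is $p_v^{\sigma_U}(i)=G(\gamma^1,\dots,\gamma^{b+1})_i$, where $G(\gamma)_i:=\big(\prod_{\ell=1}^{b+1}\gamma^\ell_i\big)\big/\big(\sum_{j}\prod_{\ell=1}^{b+1}\gamma^\ell_j\big)$, and similarly for $\phi$ with $\psi$.

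Everything then hinges on $G$ being Lipschitz on $(\S_1)^{b+1}$, which I expect to be the only genuine obstacle. As the numerator of $G$ is a polynomial, this reduces to bounding the denominator $D(\gamma)=\sum_j\prod_{\ell=1}^{b+1}\gamma^\ell_j$ away from $0$. Being multilinear, $D$ attains its minimum over the compact set $(\S_1)^{b+1}$ at a tuple of vertices of $\S_1$; and a short coordinatewise computation --- every coordinate of a point of $\S_1$ lies in $[0,\tfrac{1}{q-1}]$, so at most one coordinate can vanish --- identifies the vertices of $\S_1$ as exactly the vectors $\tfrac{1}{q-1}(\mathbf 1-e_k)$, $k\in[q]$. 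At such a tuple $D$ equals $(q-1)^{-(b+1)}\,\big|[q]\setminus\{k_1,\dots,k_{b+1}\}\big|$, where $k_\ell$ is the vanishing coordinate of the $\ell$-th vector, so $D(\gamma)\ge(q-b-1)/(q-1)^{b+1}>0$ for all $\gamma\in(\S_1)^{b+1}$ whenever $q\ge b+2$ --- which is the case in the ranges of Theorems~\ref{thm:main} and~\ref{thm:main4}. On this domain $G$ is smooth with bounded gradient, so $\|G(\gamma)-G(\psi)\|\le L\sum_{\ell=1}^{b+1}\|\gamma^\ell-\psi^\ell\|\le L(b+1)\,\delta(d)$ for a constant $L$ depending only on $q$, $b$, and the norm. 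Converting this norm to the one in Definition~\ref{def:ssm} and letting $n\to\infty$ in the reduction of the first paragraph gives $|p_v^{\sigma_U}-p_v^{\phi_U}|\le C\,\delta(\mathrm{dist}(v,\Delta))$ with $C=C(b,q,\|\cdot\|)$, as required. Apart from this Lipschitz estimate, the only other point needing care is the standard but slightly fussy infinite-to-finite passage in the first step.
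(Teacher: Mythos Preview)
The paper does not actually prove Lemma~\ref{lem:defequiv}: it is announced as a ``folklore result'' and left without proof. Your argument supplies what the paper omits and is essentially correct. The reduction from the infinite-volume measure to finite balls via the Gibbs/spatial-Markov property, the expression of the marginal at $v$ as $G(\gamma^1,\dots,\gamma^{b+1})$ in terms of the $b+1$ incoming messages, and the application of the hypothesis separately to each of the $b+1$ hanging subtrees are all standard and correctly sketched. One cosmetic point: the tree obtained by adjoining $v$ above $u_\ell$ has a root of degree~$1$, not~$b$; but since the message from $u_\ell$ to its parent depends only on the subtree at $u_\ell$, you can pad $v$ with $b-1$ extra leaf-children not in $U$ to obtain an honest $b$-ary tree without changing that message, and then the hypothesis applies verbatim.

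The only substantive step is the Lipschitz bound for $G$, which you reduce to bounding the denominator $D(\gamma)=\sum_j\prod_\ell\gamma^\ell_j$ away from zero on $(\S_1)^{b+1}$. Your argument --- $D$ is multilinear, hence minimized at a product of vertices of $\S_1$; the vertices of $\S_1$ are exactly the permutations of $(0,\tfrac{1}{q-1},\dots,\tfrac{1}{q-1})$; and at such a tuple $D=(q-1)^{-(b+1)}\,\big|[q]\setminus\{k_1,\dots,k_{b+1}\}\big|\ge(q-b-1)/(q-1)^{b+1}$ --- is correct. This is precisely the statement of Lemma~\ref{lem:prod} with $b+1$ vectors in place of $b$, which the paper proves by a different method (induction on the number of vectors); your extreme-point argument is a clean alternative. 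The caveat $q\ge b+2$ is needed for the bound to be positive; as you note it holds in the regimes of Theorems~\ref{thm:main} and~\ref{thm:main4}, and the paper only ever invokes the lemma there, so nothing is lost.
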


We need the following property of the messages in $\S_1$.

\begin{lemma}\label{lem:prod}
Let $\alpha^1,\ldots,\alpha^b \in \S_1$, then 
$$
\sum_{j \in [q]}\prod_{i \in [b]} \alpha^i_j \geq \frac{q-b}{(q-1)^b}.
$$
\end{lemma}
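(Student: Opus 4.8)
The plan is to recast the inequality via a change of variables that turns the box constraint $0 \le \alpha^i_j \le 1/(q-1)$ into a simplex constraint. For $i \in [b]$ and $j \in [q]$, write
$$
\alpha^i_j = \frac{1}{q-1}\left(1 - \xi^i_j\right).
$$
By (\ref{eq:ppty1}), the conditions $\alpha^i_j \ge 0$ and $\alpha^i_j \le 1/(q-1)$ translate exactly to $\xi^i_j \in [0,1]$, while $\sum_{j\in[q]} \alpha^i_j = 1$ becomes $\sum_{j\in[q]} \xi^i_j = q-(q-1) = 1$; thus each $\xi^i=(\xi^i_1,\dots,\xi^i_q)$ is itself a probability vector. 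Substituting,
$$
\sum_{j\in[q]} \prod_{i\in[b]} \alpha^i_j = \frac{1}{(q-1)^b}\sum_{j\in[q]} \prod_{i\in[b]}\left(1-\xi^i_j\right),
$$
so it suffices to prove $\sum_{j\in[q]} \prod_{i\in[b]}(1-\xi^i_j) \ge q-b$.

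Next I would apply the elementary bound $\prod_{i=1}^b (1-x_i) \ge 1 - \sum_{i=1}^b x_i$, valid whenever $x_i \in [0,1]$ for all $i$. (Proof by induction on $b$: the case $b=1$ is an equality; for the step, $\prod_{i=1}^b(1-x_i) \ge 0$, so the claim is trivial if $1-\sum_{i<b}x_i < 0$, and otherwise $\prod_{i=1}^b(1-x_i) = (1-x_b)\prod_{i<b}(1-x_i) \ge (1-x_b)\left(1-\sum_{i<b}x_i\right) \ge 1 - \sum_{i\le b}x_i$, the last step dropping the nonnegative term $x_b\sum_{i<b}x_i$.) Applying this with $x_i = \xi^i_j$ for each fixed $j$ and summing over $j$,
$$
\sum_{j\in[q]}\prod_{i\in[b]}(1-\xi^i_j) \ge \sum_{j\in[q]}\left(1 - \sum_{i\in[b]}\xi^i_j\right) = q - \sum_{i\in[b]}\sum_{j\in[q]}\xi^i_j = q-b,
$$
using $\sum_{j\in[q]}\xi^i_j = 1$ for each $i$. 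Combined with the previous display, this gives the lemma.

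There is no serious obstacle here beyond spotting the reformulation; the only point needing care is that the auxiliary inequality requires $x_i \ge 0$ (not merely $x_i \le 1$), which is exactly the nonnegativity half of (\ref{eq:ppty1}). The bound is sharp when $b \le q$: taking $\alpha^i$ (for $i \in [b]$) to assign mass $0$ to color $i$ and $1/(q-1)$ to every other color makes $\prod_{i\in[b]}\alpha^i_j$ vanish for $j \in [b]$ and equal $1/(q-1)^b$ for $j \in \{b+1,\dots,q\}$, yielding exactly $(q-b)/(q-1)^b$. (An alternative route is direct induction on $b$: writing $\gamma_j = \prod_{i<b}\alpha^i_j \ge 0$, the quantity $\sum_j \alpha^b_j\gamma_j$ is minimized over $\alpha^b \in \S_1$ by placing all its mass off the single largest $\gamma_j$, and $\max_j \gamma_j \le 1/(q-1)^{b-1}$ together with the inductive estimate $\sum_j \gamma_j \ge (q-b+1)/(q-1)^{b-1}$ closes the induction.)
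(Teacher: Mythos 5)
Your proof is correct, and it takes a genuinely different route from the paper's. The paper proves the lemma by induction on $b$: writing $z_j=\prod_{i\le t}\alpha^i_j$ and sorting so that $z_1\le\cdots\le z_q$, it first observes that over $\alpha^{t+1}\in\S_1$ the minimum of $\sum_j z_j\alpha^{t+1}_j$ is attained at $\alpha^{t+1}=(1/(q-1),\dots,1/(q-1),0)$, then argues that the worst case forces $\alpha^i_q=1/(q-1)$ for all $i\le t$, rescales the remaining $q-1$ coordinates by $(q-1)/(q-2)$ to land back in the analogue of $\S_1$ on $q-1$ colors, and invokes the inductive hypothesis (with both $b$ and $q$ decremented) to close. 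You instead make a single global change of variables $\alpha^i_j=(1-\xi^i_j)/(q-1)$, which maps the box-plus-sum constraint defining $\S_1$ exactly onto the probability simplex, reducing the claim to $\sum_j\prod_i(1-\xi^i_j)\ge q-b$; this then drops out of the Weierstrass product inequality $\prod_i(1-x_i)\ge 1-\sum_i x_i$ (for $x_i\in[0,1]$) together with $\sum_j\xi^i_j=1$. Your argument avoids the nested induction and the extremal-point/rescaling bookkeeping, makes the role of both halves of (\ref{eq:ppty1}) explicit, and identifies the equality cases transparently; the alternative you sketch at the end is essentially the paper's own inductive proof.
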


\begin{proof}
We prove the statement by induction on $b$. For $b=1$, the statement is true.

We assume that the statement is true for $b=t \geq 1$. We now prove the statement for $b=t+1$. Let $z_j = \prod_{i=1}^t \alpha^i_j$, for $j \in [q]$. We assume, w.l.o.g., that $z_1 \leq \ldots \leq z_q$. We have
\begin{equation}\label{eq:lemdir1}
\sum_{j \in [q]}\prod_{i \in [b]} \alpha^i_j = \sum_{j \in [q]} z_j \alpha^{t+1}_j \geq z_q \cdot 0 + \sum_{j=1}^{q-1} z_j/(q-1).
\end{equation}
Fixing $\alpha^{t+1}=(1/(q-1),\ldots,1/(q-1),0)$, we have $\sum_{j \in [q]}\prod_{i \in [b]} \alpha^i_j$ is minimized when $z_q$ is maximized. Note that $z_q = \prod_{i=1}^t \alpha^i_q \leq 1/(q-1)^b$. Hence we have $\sum_{j \in [q]}\prod_{i \in [b]} \alpha^i_j$ is minimized when $\alpha^i_q=1/(q-1)$, for all $i \in [t]$.

We next bound $\sum_{j=1}^{q-1} z_j$ from below. Let $\beta^i_j = \alpha^i_j(q-1)/(q-2)$, for $i \in [t]$ and $j \in [q-1]$. Note that $\sum_{j \in [q-1]} \beta^i_j=1$ and $0 \leq \beta^i_j \leq 1/(q-2)$, for $i \in [t]$ and $j \in [q-1]$. By induction hypothesis, we have 
$$
\sum_{j \in [q-1]}\prod_{i \in [t]} \beta^i_j \geq \frac{q-1-t}{(q-2)^{t}}.
$$
Hence we have
\begin{equation}\label{eq:lemdir2}
\sum_{j=1}^{q-1} z_j = \sum_{j \in [q-1]} \prod_{i \in [t]} \alpha^i_j = \sum_{j \in [q-1]} \prod_{i \in [t]} \frac{\beta^i_j(q-2)}{q-1} = \frac{(q-2)^t}{(q-1)^t} \sum_{j \in [q-1]} \prod_{i \in [t]} \beta^i_j \geq \frac{q-1-t}{(q-1)^t}.
\end{equation}
By~(\ref{eq:lemdir1}) and~(\ref{eq:lemdir2}), we have
$$
\sum_{j \in [q]}\prod_{i \in [b]} \alpha^i_j \geq \frac{q-b}{(q-1)^b}.
$$
\end{proof}

\section{The messages in the sum-product algorithm contract}

\subsection{Case $q \geq 1+\lceil cb \rceil$}

Theorem~\ref{thm:main} will follow from Lemma~\ref{lem:defequiv} and
the following lemma, which shows that~(\ref{eq:mproc}) is
a contraction in the following sense: if in a node we have a pair of messages from
each child then the pair of
messages from the node (where the $i$-th component in the pair is obtained
by applying~(\ref{eq:mproc}) to the $i$-th components of pairs from the children)
is closer in the $\ell_1$-norm than the $\ell_1$-distance within the pair from at least one child.

\begin{lemma}\label{lem:contractb}
Let $T=(V,E)$ be a $b$-ary tree rooted at $r$. Let $w\neq r$ be a vertex of $T$ and
let $u^1,\ldots,u^b$ be the $b$ children of $w$. Let $U \subseteq V$. Let
$\sigma_U,\phi_U: U \to [q]$ be a pair of configurations
such that $\mathrm{dist}(w,\Delta) \geq 1$, where $\Delta \subseteq U$ is
the set of vertices on which $\sigma_U$ and $\phi_U$ differ. For $\ell \in [b]$, let $\alpha^\ell$ and $\beta^\ell$ be the messages from
$u^\ell$ to $w$ according to $\sigma_U$ and $\phi_U$, respectively. Then the messages
$\zeta$ and $\eta$ from $w$ according to $\sigma_U$ and $\phi_U$, respectively, satisfy
$$
\|\zeta-\eta\|_{1} \leq \frac{b}{q}\left(1-\frac{1}{q-b}\right)^{-b+b^2/q} \cdot \max_{\ell \in [b]} \|\alpha^\ell-\beta^\ell\|_{1}.
$$
\end{lemma}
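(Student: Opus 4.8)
The plan is to prove the bound one child at a time. For $m=0,1,\dots,b$ put $\gamma^{(m)}=(\beta^1,\dots,\beta^m,\alpha^{m+1},\dots,\alpha^b)$, so that $\zeta=f(\gamma^{(0)})$, $\eta=f(\gamma^{(b)})$, and $\gamma^{(m-1)},\gamma^{(m)}$ differ only in the $m$-th slot. By the triangle inequality $\|\zeta-\eta\|_1\le\sum_{m=1}^b\|f(\gamma^{(m-1)})-f(\gamma^{(m)})\|_1$, so it is enough to show that for any fixed $\theta^1,\dots,\theta^{b-1}\in\S_1$ in the remaining slots and any $x,x'\in\S_1$,
\[
\bigl\|f(\dots,x,\dots)-f(\dots,x',\dots)\bigr\|_1\le\frac1q\Bigl(1-\frac1{q-b}\Bigr)^{-b+b^2/q}\,\|x-x'\|_1 ;
\]
summing over $m$ and using $\sum_m\|\alpha^m-\beta^m\|_1\le b\max_m\|\alpha^m-\beta^m\|_1$ then yields the lemma.

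\textbf{The single-child estimate.} Put $P_j=\prod_{k=1}^{b-1}\theta^k_j\ge0$ and $\Sigma(y)=\sum_j y_jP_j$. Then~(\ref{eq:mproc}) reads $f(\dots,y,\dots)_i=\frac1{q-1}\bigl(1-\tfrac{y_iP_i}{\Sigma(y)}\bigr)$, i.e.\ $f=\frac1{q-1}(\mathbf 1-g(y))$ where $g(y)_i=y_iP_i/\Sigma(y)$ is a probability vector (the ``tilt'' of $y$ by $P$). Interpolating $y(t)=(1-t)x+tx'$, one computes $\dot g_i=P_i\dot y_i/\Sigma-g_i(\sum_jP_j\dot y_j)/\Sigma$; since $\sum_j\dot y_j=0$ and $\sum_ig_i=1$,
\[
\|\dot g(t)\|_1\le\frac1{\Sigma(y(t))}\Bigl(\sum_iP_i|\dot y_i|+\bigl|{\textstyle\sum_j}P_j\dot y_j\bigr|\Bigr)\le\frac{\max_jP_j}{\Sigma(y(t))}\,\|x-x'\|_1 ,
\]
the final step because $\dot y$ has equal positive and negative mass. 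Hence $\|f(\dots,x,\dots)-f(\dots,x',\dots)\|_1\le\frac1{q-1}\|x-x'\|_1\int_0^1\frac{\max_jP_j}{\Sigma(y(t))}\,dt$. Using only $\max_jP_j\le(q-1)^{-(b-1)}$ together with $\Sigma(y(t))\ge(q-b)(q-1)^{-b}$ — which is Lemma~\ref{lem:prod} applied to the $b$ messages $y(t),\theta^1,\dots,\theta^{b-1}$ — this already gives $\frac1{q-b}\|x-x'\|_1$ per step and the crude bound $\|\zeta-\eta\|_1\le\frac b{q-b}\max_\ell\|\alpha^\ell-\beta^\ell\|_1$.

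\textbf{Obtaining the sharp constant — the main obstacle.} For $b\ge2$ the target constant $\frac bq(1-\frac1{q-b})^{-b+b^2/q}$ is \emph{strictly smaller} than $\frac b{q-b}$, so the crude bounds on $\max_jP_j$ and on $\Sigma$ must be exploited jointly, and one must use that the messages in the lemma are not arbitrary elements of $\S_1$: by~(\ref{eq:mproc}) and Lemma~\ref{lem:prod}, a message produced at a vertex outside $U$ has $f_i=\frac1{q-1}\bigl(1-\tfrac{\prod_\ell(\cdot)}{\Sigma}\bigr)\ge\frac1{q-1}\bigl(1-\tfrac1{q-b}\bigr)=\frac{q-b-1}{(q-1)(q-b)}=:m$ in every coordinate, and leaf messages are uniform (hence also $\ge m$); the only messages with a zero coordinate come from vertices in $U$, and for $u^\ell\in\Delta$ the pair $(\alpha^\ell,\beta^\ell)$ is a pair of explicit extreme points of $\S_1$, a separate and easy case. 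On the sub-region $\{\gamma\in\S_1:\gamma_i\ge m\ \forall i\}$ one has $P_j\ge m^{b-1}$, which combined with $y_j\le\frac1{q-1}$ and with Lemma~\ref{lem:prod} for the $b-1$ messages $\theta^k$ gives a much better lower bound on $\Sigma(y(t))$; the plan is to parametrize the extremal case by $\rho:=(q-1)^{b-1}\max_jP_j\in[0,1]$, bound $\max_jP_j/\Sigma(y(t))$ by an explicit function of $\rho$ that is decreasing on the feasible range, optimize, and integrate in $t$. The point is that each of the $b$ children contributes one factor coming from the coordinate floor $m$, i.e.\ one factor of $(1-\frac1{q-b})$ in the denominator, which is what produces the exponent $-b+b^2/q=-b(q-b)/q$; the delicate part — and where all the work lies — is that this one-variable optimization has to be tight \emph{uniformly along the interpolation} and \emph{after summing the $b$ children}, since any looseness destroys the match with the critical ratio $c=\exp(1/c)$, and the $U$-vertex children must be folded in without loss.
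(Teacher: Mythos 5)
The telescoping decomposition and the single\--child derivative estimate $\|\dot g\|_1\le(\max_jP_j/\Sigma)\,\|\dot y\|_1$ are correct and match the paper's strategy (the paper's Lemma~\ref{lem:ineqb} is exactly this per\--child contraction, with $A=\Sigma(\alpha)$). The difficulty is that you stop there: the entire content of the lemma is the sharp constant $\frac bq(1-\frac1{q-b})^{-b+b^2/q}$, and what you offer for it is a \emph{plan}, not an argument. The ``one\--variable optimization in $\rho=(q-1)^{b-1}\max_jP_j$'' is unlikely to work as stated, because the worst case for $\Sigma(y)$ is not captured by the single quantity $\max_jP_j$: the paper (Lemma~\ref{lem:prodnew}) shows the extremizer is a specific combinatorial configuration in which each of the $b$ vectors has exactly $b$ coordinates at the ceiling $1/(q-1)$ and these are distributed over the $q$ coordinates with multiplicities $\lfloor b^2/q\rfloor$ or $\lfloor b^2/q\rfloor+1$; turning this into the closed\--form bound requires the convexity fact $x^y(1-y)+x^{y-1}y\ge1$. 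That is not a one\--parameter optimization, and you give no argument that it could be reduced to one.

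A second gap is the mixture of message types. The telescoping sum applies Lemma~\ref{lem:ineqb} with the $b-1$ fixed slots holding some messages from $\S_2$ (children in $U$) and some from $\S'_1$; the required lower bound on $\Sigma$ is then the mixed\--type bound of Lemma~\ref{lem:bb}, which is proved by a separate induction on $s$ reducing the number of $\S_2$ vectors and \emph{whose dependence on $s$ has to be monotone in the right direction} so that the final maximum over $s$ gives the stated constant. Your write\--up lumps this into ``the $U$\--vertex children must be folded in without loss'' without arguing it; in fact that monotonicity ($(b-s)/(q-s)$ decreasing and $(b-s)^2/(q-s)-(b-s)$ increasing in $s$) is exactly what saves the bound. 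Finally, you mention ``for $u^\ell\in\Delta$ the pair is a separate and easy case'' — but that case breaks the argument entirely (both messages lie in $\S_2$ and differ, and your floor $m$ on coordinates does not hold); the paper's hypothesis is meant to exclude it, and asserting it is easy without proof is not acceptable. In summary: the scaffolding is right, but the proof of the actual numerical constant — the content of Lemmas~\ref{lem:prodnew} and~\ref{lem:bb} and their interaction with the telescope — is missing.
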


\begin{remark}
In the previous version of the paper, we stated an incorrect version of 
Lemma~\ref{lem:contractb} using $\ell_\infty$-norm, thanks to Sidhant Misra
and David Gamarnik for pointing out the error.
\end{remark}

\begin{proof}[Proof of Theorem~\ref{thm:main}]
We will claim that $\frac{b}{q}\left(1-\frac{1}{q-b}\right)^{-b+b^2/q} < 1$ when $q \geq 1+\lceil cb \rceil$, where $c>0$ is the root of $\exp(1/c)=c$. Taking the derivative of $\frac{b}{q}\left(1-\frac{1}{q-b}\right)^{-b+b^2/q}$ w.r.t.\ $q$, we obtain
\begin{equation}\label{eq:dircont}
\frac{b\left(-q^2+q+b^2(q-b-1)\ln\left(\frac{q-b}{q-b-1}\right)\right)}{q^3(q-b-1)} \left(\frac{q-b-1}{q-b}\right)^{b(b-q)/q}.
\end{equation}
We will show that~(\ref{eq:dircont}) is not positive when $q \geq b+1$. It is sufficient to prove that $-q^2+q+b^2(q-b-1)\ln\left(\frac{q-b}{q-b-1}\right) \leq 0$. Note that $\ln(1+x) \leq x$ for all $x \geq 0$. Hence we have
$$
b^2(q-b-1)\ln\left(\frac{q-b}{q-b-1}\right) \leq b^2 \leq (q-1)^2 \leq q^2-q.
$$
We now prove that $\frac{b}{q}\left(1-\frac{1}{q-b}\right)^{-b+b^2/q} < 1$ when $q = 1+ cb$ and $b\geq 2$. Let
$$
g(b)=\frac{b}{cb+1}\left(1-\frac{1}{(c-1)b+1}\right)^{-b+b^2/(cb+1)}.
$$
Taking the derivative of $g$, we have
\begin{equation*}
\begin{split}
\frac{\mathrm{d}g}{\mathrm{d}b} =& -\frac{1}{(cb+1)^3}\left(\frac{(c-1)b}{(c-1)b+1}\right)^{-\frac{b((c-1)b+1)}{cb+1}}\\
&\cdot\left(b+cb^2-1-cb+(b+2cb^2+c^2b^3-2b^2-cb^3)\ln\left(\frac{(c-1)b}{(c-1)b+1}\right)\right).
\end{split}
\end{equation*}
We will show that $\frac{\mathrm{d}g}{\mathrm{d}b} > 0$ for $b \geq 2$. It is sufficient to prove that
$$
b+cb^2-1-cb+(b+2cb^2+c^2b^3-2b^2-cb^3)\ln\left(\frac{(c-1)b}{(c-1)b+1}\right) < 0.
$$
Note that $\ln(1+x) \geq x-x^2/2$ for all $x \geq 0$. We have
\begin{eqnarray*}
&&-b-cb^2+1+cb+(b+2cb^2+c^2b^3-2b^2-cb^3)\ln\left(1+\frac{1}{(c-1)b}\right)\\
&\geq&-b-cb^2+1+cb+(b+2cb^2+c^2b^3-2b^2-cb^3)\left(\frac{1}{(c-1)b}-\frac{1}{2(c-1)^2b^2}\right)\\
&=&\frac{(2c^3-3c^2-c+2)b^2+(2c^2-4c+2)b-1}{2(c-1)^2b}\\
&>& 0,
\end{eqnarray*}
for all $b \geq 2$. Hence, $\frac{\mathrm{d}g}{\mathrm{d}b} > 0$ for $b \geq 2$. Note that $g(b) \to 1$ as $b \to \infty$. We have $g(b)<1$ for $b \geq 2$.

Hence Theorem~\ref{thm:main} follows from Lemma~\ref{lem:defequiv} and Lemma~\ref{lem:contractb}.
\end{proof}

Before proving Lemma~\ref{lem:contractb}, we need a more detailed understanding of the messages. Let $\S'_1 \subseteq \S_1$
be the set of vectors $\gamma \in \R^q$ satisfying the following property:
\begin{equation}\label{mp1}
\mbox{for every $i \in [q]$ we have $\frac{1}{q-1}\left(1-\frac{1}{q-b}\right) \leq \gamma_i \leq \frac{1}{q-1}$.}
\end{equation}
Let $\S_2$ be the set of permutations of $(0,1/(q-1),\ldots,1/(q-1))$.

\begin{claim}\label{clm:sp}
Let $\gamma \in \S'_1$, then $\gamma$ has at most $b$ entries of value $1/(q-1)$. If $\gamma$ has $b$ entries of value $1/(q-1)$, then all other entries of $\gamma$ have value $(1-1/(q-b))/(q-1)$.
\end{claim}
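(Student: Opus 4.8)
The plan is a direct counting argument using only the two defining properties of $\S'_1$: the coordinates of $\gamma$ sum to $1$, and each coordinate lies in the interval $[\ell,u]$ with $u=\frac{1}{q-1}$ and $\ell=\frac{1}{q-1}\left(1-\frac{1}{q-b}\right)$. Throughout we may assume $q>b$ (which holds in our regime), so $\ell$ is well defined and nonnegative.

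For the first assertion, I would suppose $\gamma$ has exactly $k$ coordinates equal to $u=1/(q-1)$, bound each of the remaining $q-k$ coordinates below by $\ell$, and write
\[
1 \;=\; \sum_{i=1}^{q}\gamma_i \;\ge\; k u + (q-k)\ell .
\]
Clearing denominators (multiply through by $(q-1)(q-b)$) turns this into a linear inequality in $k$. Since $ku+(q-k)\ell=\frac{k}{(q-1)(q-b)}+\frac{q(q-b-1)}{(q-1)(q-b)}$, demanding this be at most $1$ gives $k+q(q-b-1)\le (q-1)(q-b)$, i.e.\ $k\le (q-1)(q-b)-q(q-b-1)=b$. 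This is exactly the bound $k\le b$.

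For the second assertion I would re-examine the case $k=b$, in which the above inequality is forced to be an equality. If $\gamma$ has exactly $b$ coordinates equal to $1/(q-1)$, these contribute exactly $b/(q-1)$, so the other $q-b$ coordinates sum to $\frac{q-1-b}{q-1}$. But each of those coordinates is at least $\ell$, and $(q-b)\ell=\frac{q-b-1}{q-1}=\frac{q-1-b}{q-1}$ — precisely the required total. Hence every one of those lower bounds must be attained with equality, so each of the $q-b$ remaining coordinates equals $\ell=\frac{1}{q-1}\left(1-\frac{1}{q-b}\right)$, which is the claim.

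I do not expect a genuine obstacle here; the only point needing a little care is the bookkeeping of "the other coordinates" — a coordinate equal to $1/(q-1)$ that were not counted among the $k$ would contradict the choice of $k$, so when $k=b$ all $q-b$ remaining coordinates are genuinely smaller than $1/(q-1)$ and the equality analysis applies to each of them. Everything else is elementary arithmetic, so I would present the proof essentially as sketched above.
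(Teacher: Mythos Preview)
Your proposal is correct and is essentially the same argument as the paper's: both bound the sum $\sum_i\gamma_i$ from below using the lower bound $\ell=\frac{1}{q-1}(1-\frac{1}{q-b})$ on the non-maximal entries, obtain $k\le b$ from the resulting linear inequality, and then read off the equality case when $k=b$. The only difference is cosmetic (you multiply out the denominators while the paper leaves them factored), so there is nothing to add.
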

\begin{proof}
Assume that $\gamma$ has $s$ entries of $1/(q-1)$ and $\gamma_1,\ldots,\gamma_s = 1/(q-1)$. Then by~(\ref{mp1}), we have
$$
1=\sum_{j \in [q]} \gamma_j = \frac{s}{q-1} + \sum_{j=s+1}^q \gamma_j \geq \frac{s}{q-1} + \frac{q-s}{q-1}\left(1-\frac{1}{q-b}\right) = \frac{1}{q-1}\left(q-\frac{q-s}{q-b}\right),
$$
which implies $s \leq b$. Note that if $s=b$, we have $\gamma_{b+1}=\ldots=\gamma_q = (1-1/(q-b))/(q-1)$.
\end{proof}

The following lemma shows that the set $\S_1' \cup \S_2$ contains all the possible messages.

\begin{lemma}\label{lem:closure}
For every $\beta^1,\ldots,\beta^b \in \S_1' \cup \S_2$, we have $f(\beta^1,\ldots,\beta^b) \in \S_1'$.
\end{lemma}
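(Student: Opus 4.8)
\textbf{Proof proposal for Lemma~\ref{lem:closure}.}

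The plan is to reduce the claim to a single scalar inequality and then close the loop with Lemma~\ref{lem:prod}. First I would record that $\S_1' \cup \S_2 \subseteq \S_1$: both~(\ref{mp1}) and the description of $\S_2$ force all coordinates into $[0,1/(q-1)]$ with sum $1$. Since, as noted right after~(\ref{eq:mproc}), $f$ always outputs a vector of $\S_1$, the vector $\zeta := f(\beta^1,\ldots,\beta^b)$ automatically satisfies $\sum_i \zeta_i = 1$ and $0 \le \zeta_i \le 1/(q-1)$. Thus the only thing left to verify for membership in $\S_1'$ is the \emph{lower} bound $\zeta_i \ge \tfrac{1}{q-1}\bigl(1-\tfrac{1}{q-b}\bigr)$ for every $i \in [q]$.

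Next I would put $f$ in a convenient form. Writing $P_j := \prod_{\ell=1}^b \beta^\ell_j$ and $S := \sum_{j \in [q]} P_j$, equation~(\ref{eq:mproc}) reads $\zeta_i = \frac{S - P_i}{(q-1)S} = \tfrac{1}{q-1}\bigl(1 - \tfrac{P_i}{S}\bigr)$, where $S > 0$ since $q > b$ and, by Lemma~\ref{lem:prod}, $S \ge (q-b)/(q-1)^b > 0$. Consequently the desired lower bound on $\zeta_i$ is equivalent to $P_i/S \le 1/(q-b)$, i.e.\ to the inequality $(q-b)\,P_i \le S$.

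Finally I would prove $(q-b)\,P_i \le S$ by estimating the two sides separately. Because every $\beta^\ell$ lies in $\S_1$, each coordinate satisfies $\beta^\ell_i \le 1/(q-1)$, hence $P_i = \prod_{\ell} \beta^\ell_i \le (q-1)^{-b}$ and so $(q-b)\,P_i \le (q-b)/(q-1)^b$. On the other hand, applying Lemma~\ref{lem:prod} to $\beta^1,\ldots,\beta^b \in \S_1$ gives exactly $S \ge (q-b)/(q-1)^b$. Chaining these two bounds yields $(q-b)\,P_i \le (q-b)/(q-1)^b \le S$, which is what was needed, so $\zeta \in \S_1'$.

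There is no serious obstacle here; the proof is short, and the only point worth stressing is \emph{why} it works: the constant $1 - \tfrac{1}{q-b}$ in the definition~(\ref{mp1}) of $\S_1'$ is calibrated precisely so that the trivial coordinatewise bound $P_i \le (q-1)^{-b}$ and the lower bound on $S$ supplied by Lemma~\ref{lem:prod} match exactly (and the bound is in fact attained, e.g.\ at $\beta^1 = \cdots = \beta^b = (\tfrac1{q-1},\ldots,\tfrac1{q-1},0)$). Note also that the hypothesis "$\beta^\ell \in \S_1' \cup \S_2$" enters only through the inclusion $\S_1' \cup \S_2 \subseteq \S_1$, so the same argument actually shows $f\bigl(\S_1^{\,b}\bigr) \subseteq \S_1'$; the set $\S_2$ is listed explicitly in the statement merely because boundary messages (from vertices of $U$) are of that form and must be permitted as inputs when iterating $f$ up the tree.
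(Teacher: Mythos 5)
Your proposal is correct and takes essentially the same route as the paper: rewrite $f(\beta^1,\ldots,\beta^b)_i = \tfrac{1}{q-1}\bigl(1 - P_i/S\bigr)$, bound $P_i \le (q-1)^{-b}$ coordinatewise, and bound $S \ge (q-b)/(q-1)^b$ via Lemma~\ref{lem:prod}. The paper's proof is a one-line version of this; your extra commentary on tightness and on why $\S_2$ appears in the hypothesis is accurate but not part of the paper's argument.
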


\begin{proof}
To establish~(\ref{mp1}) we use Lemma~\ref{lem:prod} and the fact $0 \leq \beta^1_i,\ldots,\beta^b_i \leq 1/(q-1)$:
\begin{equation*}
\left(f(\beta^1,\ldots,\beta^b)\right)_i = \frac{1}{q-1}\left(1-\frac{\prod_{\ell=1}^b \beta^\ell_i}{\sum_{j \in [q]} \prod_{\ell=1}^b \beta^\ell_j}\right) \geq \frac{1}{q-1}\left(1-\frac{1}{q-b}\right).
\end{equation*}
\end{proof}

Lemma~\ref{lem:contractb} follows from the following two lemmas:

\begin{lemma}\label{lem:ineqb}
For every $\gamma^1,\ldots,\gamma^{b-1} \in \S'_1 \cup \S_2$ and every $\alpha,\beta \in \S'_1$, we have
$$
\|f(\gamma^1,\ldots,\gamma^{b-1},\alpha)-f(\gamma^1,\ldots,\gamma^{b-1},\beta)\|_{1}\leq \frac{1}{(q-1)^b A} \|\alpha-\beta\|_{1},
$$
where $A=\sum_{j \in [q]} z_j \alpha_j$ and $z_j = \prod_{i \in [b-1]} \gamma^i_j$, for $j \in [q]$.
\end{lemma}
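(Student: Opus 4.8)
The plan is as follows. First I would simplify the left‑hand side: writing $A=\sum_{j}z_j\alpha_j$ and $B=\sum_j z_j\beta_j$ (both positive, since every coordinate of a vector in $\S_1'$ is positive and $\sum_j z_j>0$ by Lemma~\ref{lem:prod}), formula~(\ref{eq:mproc}) with its first $b-1$ arguments fixed reads
$$
f(\gamma^1,\ldots,\gamma^{b-1},\alpha)_i=\frac{1}{q-1}\Bigl(1-\frac{z_i\alpha_i}{A}\Bigr),
$$
and similarly for $\beta$. Subtracting coordinatewise, the $i$-th entry of the difference is $\frac{z_i}{q-1}\bigl(\frac{\beta_i}{B}-\frac{\alpha_i}{A}\bigr)=\frac{z_i(A\beta_i-B\alpha_i)}{(q-1)AB}$, so
$$
\|f(\gamma^1,\ldots,\gamma^{b-1},\alpha)-f(\gamma^1,\ldots,\gamma^{b-1},\beta)\|_1=\frac{1}{(q-1)AB}\sum_{i\in[q]}z_i\,|A\beta_i-B\alpha_i|.
$$
I would also record that $0\le z_i\le (q-1)^{-(b-1)}$ for every $i$, since each $\gamma^\ell\in\S_1\supseteq\S_1'\cup\S_2$ has all coordinates at most $1/(q-1)$.

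Because the left-hand side is symmetric in $\alpha$ and $\beta$ (swapping them swaps $A$ and $B$), it suffices to treat the case $A\ge B$; the case $A<B$ then follows by applying the $A\ge B$ bound to the pair $(\beta,\alpha)$ together with $1/B<1/A$. So assume $A\ge B$ and put $D:=A-B\ge 0$ and $d:=\alpha-\beta$. Then $A\beta_i-B\alpha_i=(A-B)\beta_i-Bd_i=D\beta_i-Bd_i$, and the key point is that this quantity has vanishing $z$-weighted sum:
$$
\sum_{i\in[q]}z_i(D\beta_i-Bd_i)=D\sum_i z_i\beta_i-B\sum_i z_id_i=DB-BD=0,
$$
using $\sum_i z_i\beta_i=B$ and $\sum_i z_id_i=A-B=D$. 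Since the $z_i$ are nonnegative and this quantity has vanishing $z$-weighted sum, $\sum_i z_i|A\beta_i-B\alpha_i|$ equals twice the contribution of the indices on which $A\beta_i-B\alpha_i$ is negative, that is,
$$
\sum_{i\in[q]}z_i\,|A\beta_i-B\alpha_i|=2\sum_{i\,:\,Bd_i>D\beta_i}z_i(Bd_i-D\beta_i).
$$
Every index in this last sum has $d_i>0$ (because $Bd_i>D\beta_i\ge 0$ and $B>0$), and on it $Bd_i-D\beta_i\le Bd_i$; hence, enlarging the index set to all $i$ with $d_i>0$,
$$
\sum_{i\in[q]}z_i\,|A\beta_i-B\alpha_i|\le 2B\sum_{i\,:\,d_i>0}z_id_i\le\frac{2B}{(q-1)^{b-1}}\sum_{i\,:\,d_i>0}d_i=\frac{B}{(q-1)^{b-1}}\,\|\alpha-\beta\|_1,
$$
the last step using $\sum_{i:d_i>0}d_i=\tfrac12\|d\|_1$, which holds since $\sum_i d_i=0$. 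Dividing by $(q-1)AB$ yields exactly $\|f(\ldots,\alpha)-f(\ldots,\beta)\|_1\le\frac{1}{(q-1)^bA}\|\alpha-\beta\|_1$.

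The only delicate point is the factor of $2$: bounding $\sum_i z_i|A\beta_i-B\alpha_i|$ by a plain triangle inequality loses a factor of $2$ and, moreover, leaves $B$ (not $A$) in the denominator. Both defects are cured by the two moves above — reducing to $A\ge B$ by symmetry, and then using that $A\beta_i-B\alpha_i$ has vanishing $z$-weighted sum, so that its $z$-weighted $\ell_1$ mass sits entirely on the coordinates where $d_i>0$. Beyond ensuring $A,B>0$, I do not expect the refinement $\S_1'$ (versus $\S_1$) to matter here: the computation uses only that $\alpha,\beta$ are probability vectors and that $0\le z_i\le (q-1)^{-(b-1)}$.
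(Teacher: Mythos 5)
Your proof is correct, and it takes a genuinely different and more elementary route than the paper's. The paper fixes a sign pattern $s\in\{\pm 1\}^q$, parametrizes $\beta$ by $(1-t)\alpha+t\beta$, and shows the resulting scalar quantity $Q(t)$ has nonnegative derivative on $[0,1]$; the key step there is the reparametrization identity $P(t,\alpha,\beta)=\frac{1}{1-t}P(0,(1-t)\alpha+t\beta,\beta)$ (which is why the paper needs convexity of $\S_1'$) followed by a separate Claim bounding $\max_k\tau_k-\min_k\tau_k$. You instead compute the $\ell_1$ difference in closed form as $\frac{1}{(q-1)AB}\sum_i z_i|A\beta_i-B\alpha_i|$, observe that $A\beta_i-B\alpha_i=D\beta_i-Bd_i$ has vanishing $z$-weighted sum, and use this cancellation (plus $z_i\le(q-1)^{-(b-1)}$ and $\sum_i d_i=0$) to bound the mass by $\frac{B}{(q-1)^{b-1}}\|\alpha-\beta\|_1$. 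The preliminary reduction to $A\ge B$ is essential for the signs of $D\beta_i$ and for landing on $1/A$ rather than $1/B$ after dividing by $(q-1)AB$, and your symmetry argument for the other case is sound since the asserted bound only improves when $B>A$. Your route avoids the integral/derivative machinery and the auxiliary claim entirely, needs no convexity of $\S_1'$ (only strict positivity of $A,B$, which $\alpha,\beta\in\S_1'$ guarantees), and is arguably easier to check; the paper's derivative argument, on the other hand, is the template it reuses (via Lemma~\ref{lem:ineqb4}) in the non-convex $q=4,\,b=2$ case, where it only needs the segment $(1-t)\alpha+t\beta$ to stay in $\S_1'$ when $\alpha,\beta$ are coupled — a point worth noting if one wanted to port your direct computation there as well (it does port, since you never use convexity).
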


\begin{lemma}\label{lem:bb}
For every $0 \leq s \leq b < q$, let $\alpha^1,\ldots,\alpha^{b-s} \in \S'_1$ and $\alpha^{b-s+1},\ldots,\alpha^b \in \S_2$, we have 
\begin{equation}\label{eq:innerb}
\sum_{j \in [q]}\prod_{i \in [b]} \alpha^{i}_j \geq \frac{q-s}{(q-1)^{b}}\left(1-\frac{1}{q-b}\right)^{b-s-(b-s)^2/(q-s)}.
\end{equation}
\end{lemma}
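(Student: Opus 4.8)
The plan is to reduce~(\ref{eq:innerb}) to a finite combinatorial optimization in which each $\alpha^i$ is a vertex of $\S'_1$ or a member of $\S_2$, and then to close the estimate with a single convexity step. The key observation is that $\sum_{j\in[q]}\prod_{i\in[b]}\alpha^i_j$ is multilinear in $(\alpha^1,\dots,\alpha^b)$: with all but one argument fixed it is a linear functional of the remaining one. Since $\S'_1$ is a compact polytope and $\S_2$ is a finite set, the minimum of the left-hand side of~(\ref{eq:innerb}) over the stated domain is attained when each of $\alpha^1,\dots,\alpha^{b-s}$ is a vertex of $\S'_1$ and each of $\alpha^{b-s+1},\dots,\alpha^b$ lies in $\S_2$. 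So the first step is to pin down the vertices of $\S'_1$: I claim they are exactly the permutations of the vector $\gamma^*$ having $b$ entries equal to $1/(q-1)$ and $q-b$ entries equal to $(1-1/(q-b))/(q-1)$. Indeed, a vertex of the intersection of the hyperplane $\sum_j\gamma_j=1$ with the box $[(1-1/(q-b))/(q-1),1/(q-1)]^q$ must have at least $q-1$ coordinates at a box endpoint, and a short case analysis (using the normalization $\sum_j\gamma_j=1$ together with Claim~\ref{clm:sp}) rules out every pattern except ``$b$ coordinates high, $q-b$ coordinates low''.

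With this in hand, set $P=1-\tfrac1{q-b}$ and $m=b-s$. For each $\gamma^*$-type vector $\alpha^i$ ($i\le m$) let $S_i\subseteq[q]$, $|S_i|=b$, record the coordinates on which $\alpha^i$ equals $1/(q-1)$; for each $\S_2$-type vector let $t_i$ be its unique zero coordinate, and put $T=\{t_{m+1},\dots,t_b\}$ and $c_j=|\{i\in[m]:j\in S_i\}|$. Then $|T|\le s$, $0\le c_j\le m$, and $\sum_j c_j=mb$. Computing the product coordinate by coordinate (for $j\notin T$ the $s$ factors from $\S_2$-vectors contribute $(q-1)^{-s}$ and the $m$ factors from $\gamma^*$-vectors contribute $(q-1)^{-m}P^{m-c_j}$, while for $j\in T$ the product is $0$) gives
$$
\sum_{j\in[q]}\prod_{i\in[b]}\alpha^i_j=\frac1{(q-1)^b}\sum_{j\notin T}P^{m-c_j},
$$
so it remains to lower-bound $\sum_{j\notin T}P^{m-c_j}$.

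Dropping nonnegative terms, $\sum_{j\notin T}P^{m-c_j}\ge\min_{|T'|=s}\sum_{j\notin T'}P^{m-c_j}$, and the minimizing $T'$ deletes the $s$ coordinates with the largest $c_j$; since each $c_j\le m$ these carry total weight at most $sm$, so the $n:=q-s$ surviving coordinates satisfy $\sum_{j\notin T'}c_j\ge mb-sm=m^2$. The function $x\mapsto P^{-x}$ is convex and increasing (here $0<P<1$), so Jensen applied to these $n$ terms yields
$$
\sum_{j\notin T'}P^{m-c_j}=P^m\sum_{j\notin T'}P^{-c_j}\ge nP^{m-\frac1n\sum_{j\notin T'}c_j}\ge nP^{m-m^2/n}=n\Bigl(1-\tfrac1{q-b}\Bigr)^{(b-s)-(b-s)^2/(q-s)},
$$
and dividing by $(q-1)^b$ gives~(\ref{eq:innerb}). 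The one place that needs genuine care — and what I expect to be the main obstacle — is the vertex characterization of $\S'_1$ in the first step, where one must be sure that no ``intermediate'' vertices occur; one should also double-check the degenerate corners $q=b+1$ (where $P=0$) and $s=b$ (where $m=0$ and the exponent vanishes), in each of which the displayed identities still hold but deserve a separate, easy look. Everything past the extreme-point reduction is bookkeeping plus this single use of Jensen.
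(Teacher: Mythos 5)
Your proof is correct, and it takes a genuinely different route from the paper's. The paper proves the lemma by induction on $s$: the base case $s=0$ is outsourced to Lemma~\ref{lem:prodnew}, whose proof first makes the same extreme-point reduction as yours (every $\alpha^\ell$ becomes a permutation of the vector with $b$ high entries), then shows via a local exchange argument that the minimum occurs at a configuration where the counts $c_j$ differ by at most one, and finally computes that balanced minimum explicitly and closes with the elementary inequality $x^y(1-y)+x^{y-1}y\geq 1$ (which is just two-point Jensen); the inductive step for $s\mapsto s+1$ normalizes $\alpha^b$ to $(1/(q-1),\ldots,1/(q-1),0)$, projects out the zero coordinate, and rescales the remaining messages to land in the $(q-1,b-1)$-problem. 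Your argument collapses the whole thing into a single step: after the same vertex reduction (your characterization of the vertices of $\S'_1$ is correct — with $q-1$ tight box constraints one checks, as you indicate via Claim~\ref{clm:sp}, that the free coordinate is forced to a box endpoint and exactly $b$ entries are high), you account directly for the $\S_2$ factors by deleting at most $s$ vanishing coordinates, observe $\sum_{j\notin T'}c_j\geq m^2$, and apply Jensen to $x\mapsto P^{-x}$ once. This replaces both the balance-and-round lemma and the $q$-shrinking induction with one inequality, and is noticeably shorter; what the paper's route buys instead is the standalone Lemma~\ref{lem:prodnew}, which it reuses elsewhere, and a proof that never has to reason about the extreme points of a product domain with two different types of factors. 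Your flag on the degenerate corners $q=b+1$ (where $P=0$ and the Jensen step must be replaced by the trivial observation that some surviving coordinate has $c_j=m$) and $s=b$ (where $m=0$) is well placed; both check out.
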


We first prove Lemma~\ref{lem:contractb}, and then Lemma~\ref{lem:ineqb} and Lemma~\ref{lem:bb}.

\begin{proof}[Proof of Lemma~\ref{lem:contractb}]
If $w \in U$ then from the assumption $\mathrm{dist}(w,\Delta) \geq 3$ we have
$\sigma_U(w)=\phi_U(w)$ and hence $\zeta = \eta$.
From now on we assume that $w \not\in U$ and thus $\zeta=f(\alpha^1,\ldots,\alpha^b)$
and $\eta=f(\beta^1,\ldots,\beta^b)$.

Let $s$ be the number of children of $w$ which are in $U$. W.l.o.g., we assume that $u^1,\ldots,u^s$ are in $U$. Note that $\mathrm{dist}(u^\ell,\Delta) \geq 2$ for all $\ell \in [b]$. We have $\sigma_U(u^\ell)=\phi_U(u^\ell)$ for every $\ell \in [s]$, which implies $\alpha^\ell = \beta^\ell \in \S_2$, for every $\ell \in [s]$, and $\alpha^\ell,\beta^\ell \in \S'_1$, for every $s< \ell \leq b$. Using triangle inequality, Lemma~\ref{lem:ineqb} and Lemma~\ref{lem:bb} we obtain
\begin{eqnarray*}
&& \|f(\alpha^1,\ldots,\alpha^b)-f(\beta^1,\ldots,\beta^b)\|_{1}\\
& \leq &\sum_{\ell=s}^{b-1} \|f(\beta^1,\ldots,\beta^\ell,\alpha^{\ell+1},\ldots,\alpha^b)-f(\beta^1,\ldots,\beta^{\ell+1},\alpha^{\ell+2},\ldots,\alpha^b)\|_{1}\\
& \leq &\frac{1}{q-s} \left(1-\frac{1}{q-b}\right)^{(b-s)^2/(q-s)-(b-s)} \sum_{\ell=s+1}^b \|\alpha^\ell-\beta^\ell\|_{1}\\
& \leq &\frac{b-s}{q-s}\left(1-\frac{1}{q-b}\right)^{(b-s)^2/(q-s)-(b-s)}  \max_{\ell \in [b]} \|\alpha^\ell-\beta^\ell\|_{1}\\
& \leq &\frac{b}{q}\left(1-\frac{1}{q-b}\right)^{-b+b^2/q} \max_{\ell \in [b]} \|\alpha^\ell-\beta^\ell\|_{1},
\end{eqnarray*}
where the last inequality follows from the facts that $(b-s)/(q-s)$ as a function of $s$ is monotonically decreasing for $0 \leq s \leq b < q$ and $(b-s)^2/(q-s)-(b-s)$ as a function of $s$ is monotonically increasing for $0 \leq s \leq b < q$.
\end{proof}

We now prove Lemma~\ref{lem:ineqb}.

\begin{proof}[Proof of Lemma~\ref{lem:ineqb}]
We will show that for every $s_k \in\{\pm 1\}$, $k \in [q]$, we have
$$
\sum_{k \in [q]} s_k \cdot \left( (f(\gamma^1,\ldots,\gamma^{b-1},\alpha))_k-(f(\gamma^1,\ldots,\gamma^{b-1},\beta))_k \right) \leq \frac{1}{(q-1)^b A} \|\alpha-\beta\|_{1}.
$$
Let
\begin{equation}\label{eq:bee0}
\begin{split}
Q(t,\alpha,\beta) :=& \| \alpha - ((1-t)\alpha + t \beta)\|_{1}\\
&- (q-1)^bA \sum_{k \in [q]} s_k \cdot \left((f(\gamma^1,\ldots,\gamma^{b-1},\alpha))_k-(f(\gamma^1,\ldots,\gamma^{b-1}, (1-t)\alpha + t\beta))_k \right).
\end{split}
\end{equation}
Note that we have $Q(0,\alpha,\beta)=0$ and our goal is to lower bound $Q(1,\alpha,\beta)$. We have
\begin{eqnarray*}
P(t,\alpha,\beta) &:=& \frac{\partial}{\partial t} Q(t,\alpha,\beta)\\
&=& \|\alpha-\beta\|_{1} - \frac{(q-1)^{b-1}A}{B^2}
\sum_{k \in [q]} s_kz_k \left(\alpha_k\left(\sum_{j=1}^q z_j \beta_j\right) - \beta_k\left(\sum_{j=1}^q z_j \alpha_j\right)\right),
\end{eqnarray*}
where
$$
B=\sum_{j=1}^q z_j ((1-t)\alpha_j + t \beta_j).
$$
We are going to lower bound $P(t,\alpha,\beta)$ for all $\alpha,\beta\in \S'_1$ and $t\in [0,1)$. We have
$$
P(t,\alpha,\beta)=\frac{1}{1-t} P(0,(1-t)\alpha+t\beta,\beta),
$$
and hence it is enough to consider the case $t=0$ (note that $\S'_1$ is convex, and hence $(1-t)\alpha+t\beta$ is in $\S'_1$
if $\alpha,\beta$ are in $\S'_1$). Substituting $\beta_j = \alpha_j + \varepsilon_j$ into $P(0,\alpha,\beta)$ we obtain
\begin{eqnarray*}
P(0,\alpha,\beta) &=& \sum_{j \in [q]} |\varepsilon_j| - \frac{(q-1)^{b-1}}{A}
\sum_{k \in [q]} s_k z_k\left( \left(\sum_{j \in [q], j \neq k} \alpha_k z_j \varepsilon_j \right) - \varepsilon_k \left(\sum_{j \in [q], j \neq k} z_j \alpha_j \right) \right)\\
&=& \sum_{j \in [q]} |\varepsilon_j| - \frac{(q-1)^{b-1}}{A}
\sum_{k \in [q]} \varepsilon_k z_k \sum_{j \in [q]} (s_j-s_k)z_j \alpha_j\\
&=& \sum_{j \in [q]} |\varepsilon_j| - \frac{(q-1)^{b-1}}{A}
\sum_{k \in [q]} \varepsilon_k \tau_k,
\end{eqnarray*}
where $\tau_k := z_k \sum_{j \in [q]} (s_j-s_k)z_j \alpha_j$.

Let $\varepsilon_i^+=\max\{\varepsilon_i,0\}$, $\varepsilon_i^-=\max\{-\varepsilon_i,0\}$, and
$D=\sum_{i=1}^q \varepsilon^+_i=\sum_{i=1}^q \varepsilon^-_i=\|\alpha-\beta\|_1/2$.
We have
\begin{eqnarray*}
P(0,\alpha,\beta) &=& 2D - \frac{(q-1)^{b-1}}{A} \sum_{k \in [q]} (\varepsilon_k^+ - \varepsilon_k^-)\tau_k\\
&\geq& 2D - \frac{(q-1)^{b-1}}{A} \left( \sum_{k \in [q]}\varepsilon_k^+ \max_{k \in [q]}\tau_k -\sum_{k \in [q]} \varepsilon_k^- \min_{k \in [q]}\tau_k \right)\\
&=& 2D - \frac{(q-1)^{b-1}D}{A} \left(\max_{k \in [q]}\tau_k -\min_{k \in [q]}\tau_k \right).
\end{eqnarray*}

\begin{claim}\label{clm:tau}
$$\max_{k \in [q]}\tau_k -\min_{k \in [q]}\tau_k \leq \frac{2A}{(q-1)^{b-1}}.$$
\end{claim}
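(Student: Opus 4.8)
The plan is to get explicit control on each $\tau_k = z_k \sum_{j \in [q]} (s_j - s_k) z_j \alpha_j$ by separating the sum over $j$ into the terms where $s_j = s_k$ (which contribute $0$) and the terms where $s_j = -s_k$ (which contribute $\pm 2 z_j \alpha_j$). Writing $P^+ = \{j : s_j = +1\}$ and $P^- = \{j : s_j = -1\}$, one gets $\tau_k = -2 s_k z_k \sum_{j \in P^{-s_k}} z_j \alpha_j$ — that is, if $s_k = +1$ then $\tau_k = -2 z_k \sum_{j \in P^-} z_j \alpha_j \le 0$, and if $s_k = -1$ then $\tau_k = 2 z_k \sum_{j \in P^+} z_j \alpha_j \ge 0$. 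Hence $\max_k \tau_k - \min_k \tau_k \le \max_{k \in P^-} \tau_k - \min_{k \in P^+} \tau_k = 2\big(\max_{k \in P^-} z_k\big)\big(\sum_{j \in P^+} z_j \alpha_j\big) + 2\big(\max_{k \in P^+} z_k\big)\big(\sum_{j \in P^-} z_j \alpha_j\big)$, where we bound each $z_k$ in front by the max over the relevant index set (all quantities $z_j, \alpha_j$ are nonnegative, so this is legitimate; if either $P^+$ or $P^-$ is empty the bound is trivial since then all $\tau_k = 0$).

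Next I would bound the leading $z_k$ factors. Each $z_k = \prod_{i \in [b-1]} \gamma^i_k \le (1/(q-1))^{b-1}$ since every coordinate of every $\gamma^i \in \S_1' \cup \S_2$ is at most $1/(q-1)$. Plugging this in gives
$$
\max_{k} \tau_k - \min_{k} \tau_k \le \frac{2}{(q-1)^{b-1}}\left( \sum_{j \in P^+} z_j \alpha_j + \sum_{j \in P^-} z_j \alpha_j \right) = \frac{2}{(q-1)^{b-1}} \sum_{j \in [q]} z_j \alpha_j = \frac{2A}{(q-1)^{b-1}},
$$
which is exactly the claimed inequality, using $A = \sum_{j \in [q]} z_j \alpha_j$ and the fact that $P^+ \cup P^- = [q]$ is a disjoint union.

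The argument is short and the only thing to be careful about is the book-keeping in the first step: making sure the cross-cancellation $\sum_j (s_j - s_k) z_j \alpha_j$ really does reduce to twice a one-signed partial sum, and that taking $\max/\min$ over the two sign classes rather than over all of $[q]$ only loses a constant that is absorbed. There is no genuine obstacle here — the estimate $z_k \le (q-1)^{-(b-1)}$ is the crude bound that makes everything fit, and no finer structure of $\S_1'$ or $\S_2$ is needed for this particular claim. (The sharper structural facts from Claim~\ref{clm:sp} and Lemma~\ref{lem:bb} are what give the contraction factor later; this claim only needs the trivial coordinate bound.)
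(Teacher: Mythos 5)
Your proof is correct and uses essentially the same approach as the paper: decompose $\tau_k$ by the sign of $s_k$, observe that $\tau_k \ge 0$ on $P^-$ and $\tau_k \le 0$ on $P^+$, and close the estimate with the crude bound $z_k \le (q-1)^{-(b-1)}$ on each leading factor. Your bookkeeping is a bit cleaner than the paper's (you bound $\max\tau_k$ and $\min\tau_k$ directly rather than fixing the extremal indices and then optimizing over the sign pattern $P$), but the argument is the same in substance.
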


\begin{proof}[Proof of Claim~\ref{clm:tau}]
We assume, w.l.o.g., that the largest $\tau_k$ is $\tau_q$ and the smallest $\tau_k$ is
$\tau_1$. We will show
\begin{equation}\label{e2}
\tau_q-\tau_1 = \sum_{j=1}^q
(z_q(s_j-s_q)-z_1(s_j-s_1)) z_j \alpha_j \leq \frac{2A}{(q-1)^{b-1}}.
\end{equation}
Note that $s_q$ occurs in~(\ref{e2}) with
negative sign and hence we can assume $s_q=-1$. Similarly $s_1$
occurs in~(\ref{e2}) with positive sign and hence
we can assume $s_1=+1$.

Let $P$ be the set of $j\in [q]$ such that $s_j=+1$. Let $\overline{P}=[q]\setminus P$. We have
$\{1\}\subseteq P$ and $\{q\} \subseteq \overline{P}$. We can rewrite~(\ref{e2}) as follows
\begin{equation}\label{ee2}
\tau_q-\tau_1 = 2z_q \sum_{j\in P} z_j \alpha_j +
2z_1\sum_{j\in\overline{P}} z_j \alpha_j.
\end{equation}

Note that the right-hand side of~(\ref{ee2}) is symmetric between $z_1$ and  $z_q$
and hence we can, w.l.o.g., assume $z_q \geq z_1$. For fixed $\alpha,z$ the right
hand-side of~(\ref{ee2}) is maximized when $P=[q-1]$. Hence we have
$$
\tau_q-\tau_1 \leq 2z_q ( A - z_q\alpha_q) + 2z_1 z_q\alpha_q =
2z_qA - 2(z_q-z_1)z_q\alpha_q \leq \frac{2A}{(q-1)^{b-1}},
$$
where in the last inequality we used $z_1 \leq z_q \leq 1/(q-1)^{b-1}$.
\end{proof}

We now continue the proof of Lemma~\ref{lem:ineqb}. By Claim~\ref{clm:tau}, we have
$$
P(0,\alpha,\beta) \geq 0.
$$
Hence we have
\begin{equation}\label{eq:bee2}
Q(1,\alpha,\beta) = \int_0^1 P(t,\alpha,\beta) \mathrm{d}t = \int_0^1 \frac{1}{1-t} P(0,(1-t)\alpha+t\beta,\beta) \mathrm{d}t \geq 0.
\end{equation}
From~(\ref{eq:bee0}) and~(\ref{eq:bee2}), we obtain
$\|\alpha-\beta\|_{1} \geq (q-1)^bA \|f(\gamma^1,\ldots,\gamma^{b-1},\alpha)-f(\gamma^1,\ldots,\gamma^{b-1},\beta) \|_{1}$.
\end{proof}

Before proving Lemma~\ref{lem:bb}, we will show that the inequality of Lemma~\ref{lem:prod}
can be strengthened if we assume that $\alpha^1,\ldots,\alpha^b \in \S'_1$.

\begin{lemma}\label{lem:prodnew}
Let $\alpha^1,\ldots,\alpha^b \in \S'_1$, we have 
\begin{equation}\label{eq:inner}
\sum_{j \in [q]}\prod_{i \in [b]} \alpha^i_j \geq \frac{q}{(q-1)^b}\left(1-\frac{1}{q-b}\right)^{b-b^2/q}.
\end{equation}
\end{lemma}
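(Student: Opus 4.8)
The plan is to reparametrize so that the constraint defining $\S_1'$ becomes a box constraint on a probability vector, and then apply two elementary one‑variable convexity facts (one for $\ln(1-t)$, one for $e^{Lt}$). For each $i\in[b]$ write $\alpha^i_j=\frac{1}{q-1}(1-x^i_j)$. The condition $\sum_j\alpha^i_j=1$ forces $\sum_j x^i_j=1$, and membership $\alpha^i\in\S_1'$ (property~(\ref{mp1})) is exactly $0\le x^i_j\le\frac{1}{q-b}$ for every $j$. With this substitution $\sum_{j\in[q]}\prod_{i\in[b]}\alpha^i_j=(q-1)^{-b}\sum_{j\in[q]}\prod_{i\in[b]}(1-x^i_j)$, so it suffices to prove
$$
\sum_{j\in[q]}\prod_{i\in[b]}(1-x^i_j)\ \ge\ q\left(1-\frac{1}{q-b}\right)^{b-b^2/q}.
$$

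Next I would use that $t\mapsto\ln(1-t)$ is concave on $[0,1)$, hence on the interval $[0,\tfrac{1}{q-b}]$ it lies above the chord joining $(0,0)$ and $\bigl(\tfrac{1}{q-b},\ln(1-\tfrac{1}{q-b})\bigr)$; that is, for $0\le t\le\frac{1}{q-b}$,
$$
\ln(1-t)\ \ge\ (q-b)\ln\!\left(1-\frac{1}{q-b}\right) t\ =:\ L\,t ,
$$
where $L\le 0$. This is precisely the place where the improvement over Lemma~\ref{lem:prod} comes from: the chord bound is only valid because the upper constraint $\alpha^i_j\ge\frac{1}{q-1}(1-\frac{1}{q-b})$ (equivalently $x^i_j\le\frac{1}{q-b}$) keeps each $x^i_j$ in the legal range. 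Applying the bound coordinatewise and exponentiating gives $\prod_{i\in[b]}(1-x^i_j)\ge\exp(L\,y_j)$ with $y_j:=\sum_{i\in[b]}x^i_j$, and summing the identity $\sum_j x^i_j=1$ over $i$ yields $\sum_{j\in[q]} y_j=b$.

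Finally, since $y\mapsto e^{Ly}$ is convex, Jensen's inequality over the $q$ values $y_1,\dots,y_q$ gives $\sum_{j\in[q]}e^{Ly_j}\ge q\exp\!\bigl(\tfrac{L}{q}\sum_j y_j\bigr)=q\,e^{Lb/q}$, and
$$
q\,e^{Lb/q}=q\left(1-\frac{1}{q-b}\right)^{b(q-b)/q}=q\left(1-\frac{1}{q-b}\right)^{b-b^2/q},
$$
which is exactly the target; dividing by $(q-1)^b$ finishes the proof. I expect no genuine obstacle here: the only things to watch are that both inequalities point the right way (concavity of $\ln(1-t)$ for a \emph{lower} chord bound, convexity of $e^{Lt}$ for the \emph{lower} Jensen bound), that the exponent arithmetic $L\,b/q=(q-b)\ln(1-\tfrac{1}{q-b})\cdot\tfrac{b}{q}=(b-\tfrac{b^2}{q})\ln(1-\tfrac{1}{q-b})$ lines up, and that the degenerate case $q=b+1$ (where the right‑hand side is $0$) is handled trivially since the left‑hand side is nonnegative.
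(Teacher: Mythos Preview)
Your argument is correct. The reparametrization $\alpha^i_j=\tfrac{1}{q-1}(1-x^i_j)$ does turn the $\S_1'$ constraints into the box $x^i_j\in[0,\tfrac{1}{q-b}]$ with $\sum_j x^i_j=1$; the chord bound for the concave $\ln(1-t)$ on that interval gives $\prod_i(1-x^i_j)\ge e^{Ly_j}$ with $L=(q-b)\ln(1-\tfrac{1}{q-b})$ and $y_j=\sum_i x^i_j$; Jensen for the convex $y\mapsto e^{Ly}$ with $\sum_j y_j=b$ yields $\sum_j e^{Ly_j}\ge q e^{Lb/q}$; and the exponent arithmetic $(q-b)\tfrac{b}{q}=b-\tfrac{b^2}{q}$ lands exactly on the stated bound. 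The boundary case $q=b+1$ is indeed trivial.

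This is a genuinely different route from the paper's proof, which is combinatorial rather than analytic. The paper first argues (via Claim~\ref{clm:sp}) that the minimum is attained when every $\alpha^\ell$ is extremal, i.e.\ has exactly $b$ entries equal to $\tfrac{1}{q-1}$ and the remaining $q-b$ entries equal to $\tfrac{1}{q-1}(1-\tfrac{1}{q-b})$; it then uses a pairwise swapping argument to show that, among such extremal tuples, the minimum occurs when the ``column counts'' $t_j=|\{\ell:\alpha^\ell_j=\tfrac{1}{q-1}\}|$ are as balanced as possible (each $t_j\in\{\lfloor b^2/q\rfloor,\lceil b^2/q\rceil\}$), and finally evaluates this explicit configuration and bounds it below using the elementary inequality $x^y(1-y)+x^{y-1}y\ge 1$. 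Your approach bypasses all of this structure: the chord bound absorbs the first reduction (pushing each $x^i_j$ to an endpoint), and Jensen absorbs the balancing step, in a single clean stroke. The paper's approach does have the minor benefit of identifying the actual minimizing configuration, but your proof is shorter, uses no case analysis, and makes transparent why the exponent $b-b^2/q$ arises (it is simply $L\cdot b/q$ after averaging).
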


\begin{proof}
We first claim that the LHS of~(\ref{eq:inner}) is minimized when $\alpha^\ell$ has $b$ entries of value $1/(q-1)$ for all $\ell \in [b]$. Fix $\alpha^1,\ldots,\alpha^{b-1}$. Let $z_j=\prod_{i \in [b-1]} \alpha^i_j$, for $j \in [q]$. W.l.o.g., we assume that $z_1 \leq z_2 \leq \ldots \leq z_{q}$. Then by Claim~\ref{clm:sp}, the LHS of~(\ref{eq:inner}) is minimized when $\alpha^b_1=\ldots=\alpha^b_b=1/(q-1)$ and $\alpha^b_{b+1}=\ldots=\alpha^b_q=(1-1/(q-b))/(q-1)$. By the same claim, the LHS of~(\ref{eq:inner}) is minimized when $\alpha^\ell$ has $b$ entries of value $1/(q-1)$ for all $\ell \in [b]$. Hence we can assume that $\alpha^\ell$ has $b$ entries of value $1/(q-1)$ for all $\ell \in [b]$.

We next claim that the LHS of~(\ref{eq:inner}) is minimized when the number of $1/(q-1)$ in $(\alpha^\ell_j)_{\ell \in [b]}$ is either $\lfloor b^2/q \rfloor$ or $\lfloor b^2/q \rfloor+1$, for every $j \in [q]$.
Let $j_1,j_2 \in [q]$ and $j_1 \neq j_2$. Fix $(\alpha^\ell_j)_{\ell \in [b]}$ for all $j \in [q]\setminus \{j_1,j_2\}$. Let $t_1$ be the number of $1/(q-1)$ in $(\alpha^\ell_{j_1})_{\ell \in [b]}$ and let $t_2$ be the number of $1/(q-1)$ in $(\alpha^\ell_{j_2})_{\ell \in [b]}$. W.l.o.g., we assume that $t_1 \leq t_2$. We claim that the LHS of~(\ref{eq:inner}) is minimized when $t_2-t_1 \leq 1$. We have
\begin{equation*}
\begin{split}
\prod_{\ell \in [b]}\alpha^\ell_{j_1} + \prod_{\ell \in [b]}\alpha^\ell_{j_2} = 
\frac{1}{(q-1)^b}\left(1-\frac{1}{q-b}\right)^{b-t_1}+\frac{1}{(q-1)^b}\left(1-\frac{1}{q-b}\right)^{b-t_2}\\
= \frac{1}{(q-1)^b}\left(1-\frac{1}{q-b}\right)^{b-t_1-1}+\frac{1}{(q-1)^b}\left(1-\frac{1}{q-b}\right)^{b-t_2+1}\\
 + \frac{1}{(q-1)^b(q-b)}\left(1-\frac{1}{q-b}\right)^{b-t_1-1}\left(\left(1-\frac{1}{q-b}\right)^{t_1+1-t_2}-1\right).
\end{split}
\end{equation*}
If $t_2-t_1 > 1$, then
$$
\prod_{\ell \in [b]}\alpha^\ell_{j_1} + \prod_{\ell \in [b]}\alpha^\ell_{j_2} > \frac{1}{(q-1)^b}\left(1-\frac{1}{q-b}\right)^{b-t_1-1}+\frac{1}{(q-1)^q}\left(1-\frac{1}{q-b}\right)^{b-t_2+1},
$$
and the LHS of~(\ref{eq:inner}) becomes smaller by moving one $1/(q-1)$ from $(\alpha^\ell_{j_2})_{\ell \in [b]}$ to $(\alpha^\ell_{j_1})_{\ell \in [b]}$.

The minimum value of the LHS of~(\ref{eq:inner}) is:
\begin{equation}\label{eq:cont}
\frac{1}{(q-1)^b}\left(1-\frac{1}{q-b}\right)^{b-\lfloor b^2/q \rfloor} (q-b^2+q\lfloor b^2/q \rfloor) + \frac{1}{(q-1)^b}\left(1-\frac{1}{q-b}\right)^{b-\lfloor b^2/q \rfloor-1}(b^2-q\lfloor b^2/q \rfloor).
\end{equation}
Let $\{b^2/q\}$ be the fractional part of $b^2/q$, we can rewrite~(\ref{eq:cont}) as
\begin{equation*}
\begin{split}
&\frac{1}{(q-1)^b}\left(1-\frac{1}{q-b}\right)^{b-b^2/q+\{b^2/q\}} q(1-\{b^2/q\})+\frac{1}{(q-1)^b}\left(1-\frac{1}{q-b}\right)^{b-b^2/q+\{b^2/1\}-1}q\{b^2/q\}\\
&=\frac{q}{(q-1)^b}\left(1-\frac{1}{q-b}\right)^{b-b^2/q}\left(\left(1-\frac{1}{q-b}\right)^{\{b^2/q\}}(1-\{b^2/q\})+\left(1-\frac{1}{q-b}\right)^{\{b^2/q\}-1}\{b^2/q\}\right)\\
&\geq \frac{q}{(q-1)^b}\left(1-\frac{1}{q-b}\right)^{b-b^2/q},
\end{split}
\end{equation*}
where the last inequality follows from the fact that $x^y(1-y)+x^{y-1}y \geq 1$ for $0 < x < 1$ and $0 \leq y < 1$.
\end{proof}

We now prove Lemma~\ref{lem:bb}.

\begin{proof}[Proof of Lemma~\ref{lem:bb}]
We prove the statement by induction on $s$. For $s=0$, the statement follows from Lemma~\ref{lem:prodnew}.

We assume that the statement is true for $s=t \geq 0$. We next consider the case when $s=t+1$. W.l.o.g., we assume $\alpha^{b}=(1/(q-1),\ldots,1/(q-1),0)$. The LHS of~(\ref{eq:innerb}) is minimized when $\alpha^\ell_q = 1/(q-1)$ for $\ell \in [b-1]$.

We define $\beta^\ell_j = \alpha^\ell_j (q-1)/(q-2)$, for $j \in [q-1]$ and $\ell \in [b-1]$. Note that $\sum_{j \in [q]} \beta^\ell_j = 1$, for $\ell \in [b-1]$, and $0 \leq \beta^\ell_j \leq 1/(q-2)$, for $b-s+1 \leq \ell \leq b-1$, and $(1-1/(q-b))/(q-2) \leq \beta^\ell_j \leq 1/(q-2)$, for $\ell \in [b-s]$. By induction hypothesis, we have
$$
\sum_{j \in [q-1]} \prod_{\ell \in [b-1]} \beta^\ell_j \geq \frac{q-1-t}{(q-2)^{b-1}}\left(1-\frac{1}{q-b}\right)^{b-1-t-(b-1-t)^2/(q-1-t)}.
$$
Hence the LHS of~(\ref{eq:innerb}) is lower bounded by:
\begin{equation*}
\begin{split}
\frac{1}{q-1}\sum_{j \in [q-1]} \prod_{\ell \in [b-1]} \alpha^\ell_j = \frac{1}{q-1}\sum_{j \in [q-1]} \prod_{\ell \in [b-1]} \frac{(q-2)\beta^\ell_j}{q-1} = \frac{(q-2)^{b-1}}{(q-1)^b} \sum_{j \in [q-1]} \prod_{\ell \in [b-1]} \beta^\ell_j\\
\geq \frac{q-s}{(q-1)^b}\left(1-\frac{1}{q-b}\right)^{b-s-(b-s)^2/(q-s)}.
\end{split}
\end{equation*}
\end{proof}

\subsection{Case $q=4$ and $b=2$}

In this section, we assume that $q=4$ and $b=2$. We will prove the following strengthening of Lemma~\ref{lem:contractb} for the special case $q=4$ and $b=2$.

\begin{lemma}\label{lem:contract1}
Let $T$ be a binary tree rooted at $r$. Let $w\neq r$ be a vertex of $T$ and
let $u$ and $u'$ be the two children of $w$. Let $U \subseteq V$ and let
$\sigma_U,\phi_U: U \to [4]$ be a pair of configurations such that
$\mathrm{dist}(w,\Delta) \geq 3$, where $\Delta \subseteq U$
is the set of vertices on which $\sigma_U$ and $\phi_U$ differ.
Let $\alpha$, $\beta$ be the messages
from $u$ to $w$ according to $\sigma_U$ and $\phi_U$, respectively, and let $\alpha'$ and $\beta'$ be
the messages from $u'$ to $w$ according to $\sigma_U$ and $\phi_U$, respectively. Then the messages
$\zeta$ and $\eta$ from $w$ according to $\sigma_U$ and $\phi_U$, respectively, satisfy
\begin{equation}\label{ehe}
\|\zeta-\eta\|_1 \leq \frac{48}{49} \cdot \max\{\|\alpha-\beta\|_1,\|\alpha'-\beta'\|_1\}.
\end{equation}
\end{lemma}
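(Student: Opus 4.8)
The plan is to run the same telescoping argument as in Lemma~\ref{lem:contractb}, but to exploit the hypothesis $\mathrm{dist}(w,\Delta)\ge 3$ — which forces the four incoming messages to be \emph{images of $f$}, hence to have a very constrained shape — in order to push the contraction factor below~$1$. As before we may assume $w\notin U$, so $\zeta=f(\alpha,\alpha')$ and $\eta=f(\beta,\beta')$; since every message lies in $\S'_1\cup\S_2$ (Lemma~\ref{lem:closure}) and $\mathrm{dist}(u,\Delta),\mathrm{dist}(u',\Delta)\ge 2$, we have $\alpha=\beta\in\S_2$ if $u\in U$, $\alpha'=\beta'\in\S_2$ if $u'\in U$, and $\alpha,\beta,\alpha',\beta'\in\S'_1$ otherwise. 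Telescoping $\zeta-\eta$ through $f(\beta,\alpha')$ and applying Lemma~\ref{lem:ineqb} with $b=2$ (one fixed argument, using that $f$ is symmetric) gives
\[
\|\zeta-\eta\|_1\ \le\ \frac{\|\alpha-\beta\|_1}{9A_1}+\frac{\|\alpha'-\beta'\|_1}{9A_2},\qquad A_1=\sum_j\alpha_j\alpha'_j,\quad A_2=\sum_j\beta_j\alpha'_j,
\]
where, by Lemma~\ref{lem:prodnew} (and a one-line computation when a factor lies in $\S_2$), every $A_i$ multiplying a nonzero difference satisfies $A_i\ge 2/9$. Since both $\ell_1$-differences are at most $\max\{\|\alpha-\beta\|_1,\|\alpha'-\beta'\|_1\}$, it suffices to prove $\frac1{9A_1}+\frac1{9A_2}\le\frac{48}{49}$ whenever both differences are nonzero; the bare bound $A_i\ge 2/9$ only gives the useless constant~$1$, which is exactly why the stronger distance hypothesis must be used.

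I would first clear the easy configurations. If $u\in U$, or $u'\in U$, or $u$ (resp.\ $u'$) has both children in $U$ — in the last case $\alpha=f(\mu,\mu')=\beta$ because those grandchildren lie at distance $\ge 1$ from $\Delta$ and so carry identical messages under $\sigma_U$ and $\phi_U$ — then one of the two differences above vanishes, and the surviving term is at most $\tfrac12\max\{\|\alpha-\beta\|_1,\|\alpha'-\beta'\|_1\}<\tfrac{48}{49}\max\{\cdots\}$. So from now on $u,u'\notin U$ and each of $u,u'$ has at most one child in $U$.

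The crux is then to show $A_1,A_2\ge\tfrac{49}{216}$ under these remaining restrictions, which gives $\frac1{9A_1}+\frac1{9A_2}\le\tfrac29\cdot\tfrac{216}{49}=\tfrac{48}{49}$. Here I would use the shape of $f$-images. If a child of $u$ lies in $U$, its message $\mu$ is a permutation of $(0,\tfrac13,\tfrac13,\tfrac13)$ with a zero position $j_0$ that is the same for $\sigma_U$ and $\phi_U$, and the other child's message $\nu$ lies in $\S'_1$; then $\bigl(f(\mu,\nu)\bigr)_{j_0}=\tfrac13$ while for $i\ne j_0$
\[
\bigl(f(\mu,\nu)\bigr)_i=\tfrac13\Bigl(1-\tfrac{\nu_i}{1-\nu_{j_0}}\Bigr)\ \le\ \tfrac13\bigl(1-\tfrac{1/6}{5/6}\bigr)=\tfrac{4}{15},
\]
using $\nu_i\ge\tfrac16$ and $\nu_{j_0}\le\tfrac13$. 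If $u$ has no child in $U$ then $\alpha=f(\mu,\nu)$ with $\mu,\nu\in\S'_1$, and since $\mu_i\nu_i\ge\tfrac1{36}$ and $\sum_j\mu_j\nu_j\le\tfrac13$ every coordinate of $\alpha$ lies in $[\tfrac16,\tfrac{11}{36}]$. The identical statements hold for $\beta,\alpha',\beta'$, and moreover $\alpha$ and $\beta$ carry their (at most one) coordinate equal to $\tfrac13$ in the same place. Feeding these constraints into $A_1=\sum_j\alpha_j\alpha'_j$ and $A_2=\sum_j\beta_j\alpha'_j$ and minimizing over the resulting low-dimensional polytopes yields $A_1,A_2\ge\tfrac{49}{216}$, which closes the argument.

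The main obstacle is precisely this last minimization. The configurations that make $\sum_j\alpha_j\alpha'_j$ smallest approach, but never reach, a pair of complementary extreme vectors $(\tfrac13,\tfrac13,\tfrac16,\tfrac16)$ and $(\tfrac16,\tfrac16,\tfrac13,\tfrac13)$ — exactly the minimizers in Lemma~\ref{lem:prodnew}, where the value is $\tfrac29$ — and one must verify that the $f$-image restrictions (the cap $\tfrac{4}{15}$ on the non-extreme coordinates when a coordinate equals $\tfrac13$, the cap $\tfrac{11}{36}$ when none does, together with the two simplex constraints) buy back exactly the margin $\tfrac{49}{216}-\tfrac29=\tfrac1{216}$. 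I expect this to require splitting into the four cases according to whether each of $u,u'$ has one or zero children in $U$, and, in each case, either an extreme-point enumeration of a polytope of dimension at most six or a mean-plus-covariance decomposition of $\sum_j\alpha_j\alpha'_j$ followed by Cauchy--Schwarz on the (necessarily small) coordinate spreads. Once $A_1,A_2\ge\tfrac{49}{216}$ is in hand, (\ref{ehe}) follows by combining with the displayed telescoping inequality.
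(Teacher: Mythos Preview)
Your proposal is correct and follows essentially the same route as the paper: telescope through $f(\beta,\alpha')$, reduce to the situation where each of $\alpha,\beta,\alpha',\beta'$ carries at most one coordinate equal to $\tfrac13$, and then lower–bound the inner products $A_1,A_2$ by $\tfrac{49}{216}$ so that $\tfrac1{9A_1}+\tfrac1{9A_2}\le\tfrac{48}{49}$.

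The only real difference is packaging. The paper encodes the structural information once and for all by enlarging the definition of $\S'_1$ with two extra properties: every coordinate is either $\tfrac13$ or at most $\tfrac{11}{36}$ (your ``no child in $U$'' cap), and any vector with exactly two coordinates equal to $\tfrac13$ is a permutation of $(\tfrac16,\tfrac16,\tfrac13,\tfrac13)$. It then introduces the notion of \emph{coupled} messages (same $\tfrac13$–positions) to replace your ``both grandchildren in $U$ forces $\alpha=\beta$'' step, and isolates the inner–product bound as a standalone lemma (Lemma~\ref{lem:prodlb4}). With that organization the ``main obstacle'' you flag disappears: the single cap $\tfrac{11}{36}$, together with ``at most one coordinate equals $\tfrac13$'', already yields $\sum_j\alpha_j\alpha'_j\ge\tfrac{149}{648}>\tfrac{49}{216}$ by a two–line rearrangement argument (sort $\alpha$, take the worst $\alpha'=(\tfrac13,\tfrac{11}{36},\tfrac{7}{36},\tfrac16)$, then take the worst $\alpha$), so your four–way case split on how many grandchildren lie in $U$ and your sharper cap $\tfrac4{15}$ are not needed. (Incidentally, the justification you give for $\tfrac4{15}$ should invoke $\nu_{j_0}\ge\tfrac16$, not $\nu_{j_0}\le\tfrac13$; the displayed computation is nonetheless correct.) What your finer caps would buy is a slightly better contraction constant than $\tfrac{48}{49}$, but the paper does not pursue that.
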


Theorem~\ref{thm:main4} now follows:

\begin{proof}[Proof of Theorem~\ref{thm:main4}]
Theorem~\ref{thm:main4} follows from Lemma~\ref{lem:defequiv} and Lemma~\ref{lem:contract1}.
\end{proof}

Before proving Lemma~\ref{lem:contract1}, we need a more detailed understanding of the messages.
Let $\S_1'\subseteq \S_1$ be the set of vectors $\gamma \in \R^4$ satisfying the following three
properties:
\begin{equation}\label{4mp1}
\mbox{for every $i \in [4]$ we have $1/6 \leq \gamma_i \leq 1/3$,}
\end{equation}
\begin{equation}\label{4mp2}
\mbox{for every $i \in [4]$ either $\gamma_i = 1/3$ or $\gamma_i \leq 11/36$,}
\end{equation}
\begin{equation}\label{4mp3}
\mbox{if $\gamma$ has exactly two entries of value $1/3$, then $\gamma$ is a permutation of $(1/6,1/6,1/3,1/3)$.}
\end{equation}

Let $\S_2$ be the set of permutations of $(0,1/3,1/3,1/3)$.
\begin{definition}
We say that two vectors $\gamma,\xi\in\S'_1\cup\S_2$ are {\em coupled} if for every $i\in [4]$ we have
$\gamma_i=1/3$ if and only if $\xi_i=1/3$.
\end{definition}

\begin{claim}\label{clm:prodbd}
Let $\gamma,\xi \in \S_1$. Then
$\sum_{i=1}^4 \gamma_i \xi_i \leq \frac{1}{3}$.
\end{claim}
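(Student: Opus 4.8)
The plan is to exploit nothing more than the two defining features of $\S_1$ for $q=4$: every coordinate lies in $[0,1/3]$, and the coordinates sum to $1$. First I would write $\gamma_i\xi_i \le \tfrac13\,\xi_i$ for each $i\in[4]$, which is valid because $\gamma_i \le 1/(q-1) = 1/3$ and $\xi_i \ge 0$. Summing this over $i$ and using $\sum_{i=1}^4 \xi_i = 1$ immediately gives
$$
\sum_{i=1}^4 \gamma_i\xi_i \;\le\; \frac13\sum_{i=1}^4 \xi_i \;=\; \frac13 ,
$$
which is exactly the claim. So the proof is essentially one line, and there is no real obstacle; the only thing worth noting is that the bound is tight, attained e.g.\ at $\gamma=\xi=(1/3,1/3,1/3,0)$.

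If one prefers a more structural argument that also explains tightness, I would instead observe that $(\gamma,\xi)\mapsto \sum_i \gamma_i\xi_i$ is bilinear, hence on the compact convex set $\S_1\times\S_1$ its maximum is attained at a pair of extreme points of $\S_1$. The extreme points of $\S_1=\{\gamma\in\R^4 : \sum_i\gamma_i=1,\ 0\le\gamma_i\le 1/3\}$ are precisely the four permutations of $(1/3,1/3,1/3,0)$ (any vertex needs three tight box constraints together with the hyperplane $\sum\gamma_i=1$, and the only feasible such points have three coordinates equal to $1/3$ and one equal to $0$). For two such vectors with the zero coordinate in positions $a$ and $b$: if $a=b$ the inner product is $3\cdot\tfrac19=\tfrac13$, and if $a\ne b$ it is $2\cdot\tfrac19=\tfrac29<\tfrac13$. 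Either way the maximum is $1/3$.

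I would present the first (coordinatewise) argument as the proof, since it is the shortest and uses only properties already recorded in~(\ref{eq:ppty1}), and perhaps add a parenthetical remark that equality forces $\gamma_i=1/3$ whenever $\xi_i>0$, which is the form in which the claim is likely to be applied later (e.g.\ when ruling out configurations in the analysis of Lemma~\ref{lem:contract1}).
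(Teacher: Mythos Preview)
Your coordinatewise argument is correct and, if anything, shorter than the paper's. The paper instead orders $\gamma_1\le\cdots\le\gamma_4$, observes that for fixed $\gamma$ the bilinear form is maximized at the extreme point $\xi=(0,1/3,1/3,1/3)$, and obtains the intermediate bound $\sum_i\gamma_i\xi_i\le(1-\gamma_1)/3\le 1/3$. Both proofs use only the constraints in~(\ref{eq:ppty1}); yours bounds $\gamma_i\le 1/3$ uniformly and sums, while the paper optimizes one factor first. The paper's route has the minor advantage that the sharper expression $(1-\gamma_1)/3$ makes the equality case ($\gamma_1=0$) immediate without a separate remark, but your approach (together with your noted equality condition) delivers the same information. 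Your alternative extreme-point argument over $\S_1\times\S_1$ is also fine and is essentially a symmetric version of what the paper does in one variable.
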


\begin{proof}
W.l.o.g., we assume that $\gamma_1 \leq \ldots \leq \gamma_4$. For fixed $\gamma$, the maximum of $\sum_{i=1}^4 \gamma_i \xi_i$
over $\xi\in\S_1$ happens for $\xi=(0,1/3,1/3,1/3)$ and hence
$$
\sum_{i=1}^4 \gamma_i \xi_i \leq \gamma_1 \cdot 0 + \sum_{i=2}^4 \gamma_i/3 = (1-\gamma_1)/3 \leq \frac{1}{3}.
$$
\end{proof}

The following lemma shows that the set $\S_1' \cup \S_2$ contains all the possible messages.
\begin{lemma}\label{lem:closure4}
For every $\gamma,\xi \in \S_1' \cup \S_2$, we have $f(\gamma,\xi) \in \S_1'$.
\end{lemma}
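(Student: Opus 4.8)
The plan is to reduce everything to controlling the single ratio $\gamma_i\xi_i/S$, where $S := \sum_{j\in[4]}\gamma_j\xi_j$. With $q=4$ and $b=2$, formula~(\ref{eq:mproc}) reads
\[
(f(\gamma,\xi))_i = \frac{1}{3}\left(1-\frac{\gamma_i\xi_i}{S}\right),
\]
from which $\sum_{i\in[4]}(f(\gamma,\xi))_i=1$ and $0\le (f(\gamma,\xi))_i\le 1/3$ are immediate, so $f(\gamma,\xi)\in\S_1$. It then remains to verify the three extra properties~(\ref{4mp1})--(\ref{4mp3}) that define $\S_1'$, and each of them becomes an upper or lower bound on $\gamma_i\xi_i/S$.

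For~(\ref{4mp1}), I would use $\gamma_i\xi_i\le (1/3)^2=1/9$ (every coordinate of a vector in $\S_1'\cup\S_2\subseteq\S_1$ is at most $1/3$) together with $S\ge 2/9$, which is Lemma~\ref{lem:prod} specialized to $q=4,\,b=2$; hence $\gamma_i\xi_i/S\le 1/2$ and $(f(\gamma,\xi))_i\ge \tfrac13(1-\tfrac12)=\tfrac16$, while the upper bound $\le 1/3$ is clear. For~(\ref{4mp2}), observe that $(f(\gamma,\xi))_i=1/3$ precisely when $\gamma_i\xi_i=0$; otherwise $\gamma_i>0$ and $\xi_i>0$, and since every nonzero coordinate of a vector in $\S_1'\cup\S_2$ is at least $1/6$ (it is $1/3$ for $\S_2$ and $\ge 1/6$ for $\S_1'$ by~(\ref{4mp1})) we get $\gamma_i\xi_i\ge 1/36$; combining this with $S\le 1/3$ from Claim~\ref{clm:prodbd} gives $\gamma_i\xi_i/S\ge 1/12$, so $(f(\gamma,\xi))_i\le \tfrac13(1-\tfrac1{12})=\tfrac{11}{36}$.

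For~(\ref{4mp3}) I would use the same characterization: coordinate $i$ of $f(\gamma,\xi)$ equals $1/3$ iff $\gamma_i\xi_i=0$. A vector in $\S_1'$ has no zero coordinate and a vector in $\S_2$ has exactly one, so the product vector $(\gamma_j\xi_j)_{j\in[4]}$ has two zero coordinates only if $\gamma,\xi\in\S_2$ and their unique zeros sit in distinct coordinates $i_1\ne i_2$. In that situation $\gamma$ and $\xi$ both equal $1/3$ in the remaining two coordinates $i_3,i_4$, so $\gamma_{i_3}\xi_{i_3}=\gamma_{i_4}\xi_{i_4}=1/9$, whence $S=2/9$ and $(f(\gamma,\xi))_{i_3}=(f(\gamma,\xi))_{i_4}=\tfrac13(1-\tfrac{1/9}{2/9})=\tfrac16$; thus $f(\gamma,\xi)$ is a permutation of $(1/6,1/6,1/3,1/3)$, as required.

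None of these steps is genuinely hard — the lemma is essentially a refinement of Lemma~\ref{lem:closure} tailored to the extra constraints~(\ref{4mp2}) and~(\ref{4mp3}). The one place that needs a little care is the casework for~(\ref{4mp3}): one must correctly account for how many zero coordinates the product vector $(\gamma_j\xi_j)_j$ can have depending on whether each input lies in $\S_1'$ or in $\S_2$, noting that an input from $\S_1'$ contributes no zeros at all, so two coordinates of value $1/3$ in the output can arise only from two inputs in $\S_2$ whose zero supports are disjoint.
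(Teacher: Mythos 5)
Your proposal is correct and follows essentially the same route as the paper's proof: lower-bound $(f(\gamma,\xi))_i$ via Lemma~\ref{lem:prod} for~(\ref{4mp1}), upper-bound the nonzero entries via the lower bound $\gamma_i\xi_i\ge 1/36$ and Claim~\ref{clm:prodbd} for~(\ref{4mp2}), and observe that two entries of $1/3$ in the output force $\gamma,\xi\in\S_2$ with disjoint zero positions, yielding a permutation of $(1/6,1/6,1/3,1/3)$ for~(\ref{4mp3}). You are marginally more explicit than the paper in noting that nonzero entries of $\S_2$ vectors equal $1/3$ (so the $\ge 1/6$ bound does not rely on~(\ref{4mp1}) alone), but this is a clarification rather than a different argument.
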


\begin{proof}
To establish~(\ref{4mp1}) we use Lemma~\ref{lem:prod} and the fact $0 \leq \gamma_i,\xi_i \leq 1/3$:
$$
(f(\gamma,\xi))_i = \frac{1}{3} \left(1-\gamma_i \xi_i / \left(\sum_{j=1}^4 \gamma_j \xi_j\right) \right) \geq \frac{1}{3}\left(1-\frac{1}{2}\right) = 1/6.
$$
Note that if $(f(\gamma,\xi))_i \neq 1/3$ then $\gamma_i\neq 0$ and $\xi_i\neq 0$. Then~(\ref{4mp1}) implies $\gamma_i\geq 1/6$
and $\xi_i\geq 1/6$ which combined with
the upper bound of Claim~\ref{clm:prodbd} yields~(\ref{4mp2})
$$
(f(\gamma,\xi))_i \leq \frac{1}{3}\left(1-\frac{1/36}{1/3}\right)=\frac{11}{36}.
$$
Now we show~(\ref{4mp3}). Assume $f(\gamma,\xi)_i = f(\gamma,\xi)_j = 1/3$ for $i\neq j$. Then we have
$(\gamma_i=0 \vee \xi_i=0)$ and $(\gamma_j=0 \vee \xi_j=0)$. Note that at most one entry of $\gamma$
and at most one entry of $\xi$ can be $0$ (and then $\gamma,\xi\in\S_2$).
We can, w.l.o.g, assume $\gamma=(0,1/3,1/3,1/3)$ and $\xi=(1/3,0,1/3,1/3)$. Hence $f(\gamma,\xi)=(1/6,1/6,1/3,1/3)$.
\end{proof}

Lemma~\ref{lem:contract1} will follow from the following contraction
properties of~(\ref{eq:mproc}).

\begin{lemma}\label{lem:ineqb4}
Let $\alpha,\beta \in \S'_1$ be coupled, and let $\gamma \in \S'_1 \cup \S_2$, we have
$$
\|f(\alpha,\gamma)-f(\beta,\gamma)\|_{1}\leq \frac{1}{9 \sum_{i=1}^4 \alpha_i\beta_i} \|\alpha-\beta\|_{1}.
$$
\end{lemma}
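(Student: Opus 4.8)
The plan is to follow the proof of Lemma~\ref{lem:ineqb} specialized to $q=4$, $b=2$ --- where the single ``background'' message is $\gamma$ and the perturbed message occupies the first slot of $f$ --- and then to replace the generic denominator $9\sum_i\gamma_i\alpha_i$ produced by that lemma with the (typically smaller) quantity $9\sum_i\alpha_i\beta_i$, the improvement being powered by the hypothesis that $\alpha$ and $\beta$ are coupled. Since $b=2$ the computation is fully explicit. Write $A_\alpha=\sum_j\gamma_j\alpha_j$, $A_\beta=\sum_j\gamma_j\beta_j$, $\varepsilon=\beta-\alpha$ and $C=\sum_j\gamma_j\varepsilon_j=A_\beta-A_\alpha$. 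From~(\ref{eq:mproc}) one has $(f(\alpha,\gamma))_k=\tfrac13\bigl(1-\gamma_k\alpha_k/A_\alpha\bigr)$, hence
$$
(f(\alpha,\gamma))_k-(f(\beta,\gamma))_k=\frac{\gamma_k}{3A_\alpha A_\beta}\bigl(\beta_k A_\alpha-\alpha_k A_\beta\bigr)=\frac{\gamma_k}{3A_\alpha A_\beta}\bigl(\varepsilon_k A_\alpha-\alpha_k C\bigr),
$$
and, summing absolute values (using $\gamma_k\ge 0$), $\|f(\alpha,\gamma)-f(\beta,\gamma)\|_1=\frac{1}{3A_\alpha A_\beta}\sum_k\gamma_k\bigl|\varepsilon_k A_\alpha-\alpha_k C\bigr|$. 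Thus the lemma is equivalent to the single scalar estimate
$$
3\Bigl(\sum_i\alpha_i\beta_i\Bigr)\sum_{k\in[4]}\gamma_k\,\bigl|\varepsilon_k A_\alpha-\alpha_k C\bigr|\ \leq\ A_\alpha A_\beta\,\|\alpha-\beta\|_1 .
$$
(Equivalently, one can run the interpolation argument of Lemma~\ref{lem:ineqb} verbatim with $\mu(t)=(1-t)\alpha+t\beta$: the quantity $\varepsilon_kB-\mu_kC$ appearing there simplifies for $b=2$ to $\varepsilon_kA_\alpha-\alpha_kC$, independently of $t$, and $\int_0^1(A_\alpha+tC)^{-2}\,dt=1/(A_\alpha A_\beta)$, which yields the same estimate.)

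To prove the estimate I would first clear away the degenerate case: if $\alpha$ (equivalently $\beta$, by coupling) has two entries equal to $1/3$, then by~(\ref{4mp3}) it is a permutation of $(1/6,1/6,1/3,1/3)$, and coupling together with~(\ref{4mp1}) forces $\beta=\alpha$, so both sides vanish. Hence we may assume $\alpha,\beta$ have at most one coordinate equal to $1/3$, necessarily the same coordinate $m$, with $\alpha_m=\beta_m=1/3$ (so $\varepsilon_m=0$) and every other entry of $\alpha$ and $\beta$ lying in $[1/6,11/36]$ by~(\ref{4mp1})--(\ref{4mp2}). Writing $\varepsilon_kA_\alpha-\alpha_kC=\sum_{j\neq k}\gamma_j(\varepsilon_k\alpha_j-\alpha_k\varepsilon_j)$ and splitting according to whether $\gamma\in\S_2$ (one zero entry, three entries $1/3$) or $\gamma\in\S'_1$ (all entries in $[1/6,1/3]$ with the structure of~(\ref{4mp1})--(\ref{4mp3})), one then bounds $\gamma_k\le 1/3$, uses that $\varepsilon$ is zero-sum and supported off coordinate $m$, and lower-bounds $A_\alpha,A_\beta$ while upper-bounding $\sum_i\alpha_i\beta_i$ from the same structural constraints, checking the inequality case by case (the number of $1/3$-entries of $\alpha,\beta$, the type of $\gamma$, and the position of the zero of $\gamma$ relative to $m$ being the relevant parameters).

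The step I expect to be the genuine obstacle is exactly this last estimate, because it is essentially tight: when $\gamma\in\S_2$ has its zero coordinate at $m$ and $\alpha,\beta$ differ in two non-$m$ coordinates, the two sides are nearly equal, so the naive bound obtained by replacing each $\gamma_k$ by $1/3$ and each $|\varepsilon_k\alpha_j-\alpha_k\varepsilon_j|$ by $\alpha_j|\varepsilon_k|+\alpha_k|\varepsilon_j|$ throws away too much and loses the constant $1/9$; one must instead exploit the coupling of $\alpha,\beta$ and the bounds~(\ref{4mp1})--(\ref{4mp3}) simultaneously, so that no case discards more than the estimate can afford. As the remark after Lemma~\ref{lem:contractb} cautions for the $\ell_\infty$ version, an over-optimistic bound here is easy to get wrong, so I would organize the proof around a careful enumeration of the supports of the $1/3$-entries of $\gamma,\alpha,\beta$ relative to the zero of $\gamma$ and verify that the factor $1/9$ survives in each case.
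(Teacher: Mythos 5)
Your reading of the situation is more perceptive than you give yourself credit for, and you are right to flag the last step as the obstacle: it is not merely hard, it is impossible, because the lemma \emph{as stated} is false. Here is a concrete counterexample. Take $\gamma=(0,1/3,1/3,1/3)\in\S_2$, $\alpha=(1/3,\,11/36,\,7/36,\,1/6)\in\S'_1$, $\beta=(1/3,\,1/6,\,7/36,\,11/36)\in\S'_1$; these are coupled (only the first coordinate equals $1/3$ in either). Then $\sum_j\gamma_j\alpha_j=\sum_j\gamma_j\beta_j=2/9$, $\|\alpha-\beta\|_1=5/18$, $f(\alpha,\gamma)=(1/3,\,13/72,\,17/72,\,1/4)$, $f(\beta,\gamma)=(1/3,\,1/4,\,17/72,\,13/72)$, so $\|f(\alpha,\gamma)-f(\beta,\gamma)\|_1=5/36$, giving a contraction ratio of exactly $1/2$. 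But $\sum_i\alpha_i\beta_i=325/1296$, so $\frac{1}{9\sum_i\alpha_i\beta_i}=\frac{144}{325}\approx 0.443<1/2$, and the claimed inequality fails.

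What the paper actually intends (and what its one-line proof delivers) is the denominator $\sum_i\gamma_i\alpha_i$, i.e.\ the quantity $A$ of Lemma~\ref{lem:ineqb} specialized to $b=2$, $z_j=\gamma_j$; the printed $\sum_i\alpha_i\beta_i$ is a typo. The paper's whole proof is: $\S'_1$ is not convex, but when $\alpha,\beta$ are coupled the segment $(1-t)\alpha+t\beta$ stays in $\S'_1$ and remains coupled to $\alpha$, so the interpolation argument of Lemma~\ref{lem:ineqb} (which uses Claim~\ref{clm:tau}) applies verbatim. That argument produces $\frac{1}{9\sum_i\gamma_i\alpha_i}$ (or, equally, $\frac{1}{9\sum_i\gamma_i\beta_i}$). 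You correctly computed that the ``same proof'' yields $\gamma$-weighted inner products and that closing the gap to $\sum_i\alpha_i\beta_i$ would require a genuinely new estimate --- that estimate does not exist. You can also check that the two downstream uses only need the $\gamma$-weighted form: in Lemma~\ref{lem:sineq3}, $\gamma\in\S'_1$ with at most one entry $1/3$ and $\alpha\in\S'_1$ avoid every ``$=2/9$'' configuration in Lemma~\ref{lem:prodlb4}, so $\sum_i\gamma_i\alpha_i\geq 49/216$; and in the first case of Lemma~\ref{lem:contract1}, Lemma~\ref{lem:prod} gives $\sum_i\alpha'_i\alpha_i\geq 2/9$. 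So the correct move is to restate the lemma with $\sum_i\gamma_i\alpha_i$, observe that coupling keeps the interpolating point inside $\S'_1$, and cite the proof of Lemma~\ref{lem:ineqb}; the case analysis you sketched in the final paragraph should be discarded.

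One further remark: your computation that $\int_0^1(A_\alpha+tC)^{-2}\,dt=1/(A_\alpha A_\beta)$ is correct and actually shows that, as written, $Q(1)$ in the proof of Lemma~\ref{lem:ineqb} comes out as $\|\alpha-\beta\|_1-\frac{(q-1)^{b-1}}{A_\beta}\sum_k\varepsilon_k\tau_k$; combined with Claim~\ref{clm:tau} (which bounds $\sum_k\varepsilon_k\tau_k$ by $A_\alpha\|\alpha-\beta\|_1/(q-1)^{b-1}$), one first obtains the bound with $A_\beta$ in the denominator and then gets $A_\alpha$ by symmetry. The identity $P(t,\alpha,\beta)=\frac{1}{1-t}P(0,(1-t)\alpha+t\beta,\beta)$ asserted in that proof is off by a factor of $A_\alpha/A_\mu$, but the conclusion survives; none of this helps toward $\sum_i\alpha_i\beta_i$.
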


\begin{proof}
Note that $\S'_1$ defined by equations~(\ref{4mp1})--(\ref{4mp3}) is not a convex set. However, if $\alpha,\beta \in \S'_1$ and $\alpha,\beta$ are coupled, then $(1-t)\alpha+t\beta \in \S'_1$ and $\alpha,(1-t)\alpha+t\beta$ are coupled. The lemma then follows from the same proof of Lemma~\ref{lem:ineqb}.
\end{proof}

\begin{lemma}\label{lem:sineq3}
Let $\alpha,\beta \in \S_1'$ be coupled, and let $\gamma \in \S_1'$ be
such that $\gamma$ has at most one entry of value $1/3$. Then we have
\begin{equation*}
\frac{49}{24}\|f(\alpha,\gamma)-f(\beta,\gamma)\|_1 \leq \|\alpha-\beta\|_1.
\end{equation*}
\end{lemma}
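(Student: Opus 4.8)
The plan is to combine the general contraction bound of Lemma~\ref{lem:ineqb4} with a careful lower bound on the inner product $\sum_{i=1}^4 \alpha_i\beta_i$ under the hypothesis that $\gamma$ has at most one entry of value $1/3$. By Lemma~\ref{lem:ineqb4}, $\|f(\alpha,\gamma)-f(\beta,\gamma)\|_1 \leq \frac{1}{9\sum_i \alpha_i\beta_i}\|\alpha-\beta\|_1$, so it suffices to show $\sum_{i=1}^4 \alpha_i\beta_i \geq 24/49$. However, this is \emph{not} true in general for $\alpha,\beta\in\S'_1$ coupled (e.g.\ $\alpha=\beta=(11/36,11/36,11/36,3/36)$ would need checking), so the hypothesis on $\gamma$ must enter. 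The key realization is that $\alpha$ and $\beta$ are not arbitrary elements of $\S'_1$: they are themselves outputs of the message-passing map $f$, and moreover they are coupled, meaning they have $1/3$-entries in exactly the same coordinates. The constraint that $\gamma$ has at most one $1/3$-entry restricts which coupled pairs $(\alpha,\beta)$ can actually arise downstream, but more directly, I expect one needs a sharper structural description: since $\alpha,\beta$ are coupled and each lies in $\S'_1$, by~(\ref{4mp3}) they have $0$, $1$, or $2$ common $1/3$-entries, and the case of $2$ common entries forces both to be permutations of $(1/6,1/6,1/3,1/3)$, giving $\sum\alpha_i\beta_i = 1/36+1/36+1/9+1/9 = 10/36 > 24/49$? — no, $10/36 < 24/49$, so that naive route fails and the bound on $\gamma$ is genuinely needed in a two-step argument.

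The honest approach, then, is the following two-step contraction, mirroring the structure of the proof of Lemma~\ref{lem:contractb}. First I would write $\zeta = f(\alpha,\gamma)$ and $\eta = f(\beta,\gamma)$ and note that $\zeta,\eta\in\S'_1$ by Lemma~\ref{lem:closure4}, and crucially that $\zeta,\eta$ are coupled (this follows from the proof of Lemma~\ref{lem:closure4}: $f(\gamma,\xi)_i = 1/3$ iff $\gamma_i=0$ or $\xi_i=0$, and since $\alpha,\beta$ are coupled they have the same zero-pattern as each other — wait, elements of $\S'_1$ have no zero entries, so $\zeta_i = 1/3$ iff $\gamma_i = 0$, independent of $\alpha$ vs $\beta$; hence $\zeta,\eta$ are coupled, with $1/3$-entries exactly where $\gamma$ is $0$). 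Since $\gamma$ has at most one $1/3$-entry, $\gamma$ is either in $\S_1'$ with no $1/3$-entry, or a permutation of $(0,1/3,1/3,1/3)\in\S_2$ — but the latter has three $1/3$-entries, contradiction, so in fact $\gamma\in\S'_1$ with at most one $1/3$-entry, hence by~(\ref{4mp1})--(\ref{4mp2}) every entry of $\gamma$ is either $1/3$ (at most once) or in $[1/6,11/36]$.

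With $\gamma$ so constrained, I would lower-bound $\sum_{i=1}^4\alpha_i\beta_i$ directly. Since $\alpha,\beta\in\S'_1$ are coupled, enumerate by the number $k\in\{0,1,2\}$ of shared $1/3$-entries. For $k=2$: both are permutations of $(1/6,1/6,1/3,1/3)$ in the same slots, so $\sum\alpha_i\beta_i = 2\cdot\tfrac19 + 2\cdot\tfrac1{36} = \tfrac{10}{36}$, which is below $24/49$ — so in this case Lemma~\ref{lem:ineqb4} alone is insufficient and I must instead bound $\|\zeta-\eta\|_1$ by going one more level, or use that $\|\alpha-\beta\|_1$ is small relative to the worst case. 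The cleaner fix: bound $\|f(\alpha,\gamma)-f(\beta,\gamma)\|_1$ by a \emph{coordinatewise} computation using the explicit form $(f(\alpha,\gamma))_i = \tfrac13(1 - \alpha_i\gamma_i/\sum_j\alpha_j\gamma_j)$, track how the ratio changes as $\alpha$ varies, and optimize over $\alpha,\beta,\gamma$ in the (finite-dimensional, compact, semialgebraic) feasible region to extract the constant $24/49$. The main obstacle is exactly this optimization: showing $49/24 \cdot \|f(\alpha,\gamma)-f(\beta,\gamma)\|_1 \leq \|\alpha-\beta\|_1$ requires identifying the extremal configuration, which I expect to be $\gamma$ having one $1/3$-entry and three $11/36$-entries (so $\sum\gamma_j = 1/3 + 3\cdot 11/36 = 12/36+33/36 \neq 1$ — recompute: the constraint $\sum\gamma_j=1$ with one entry $1/3$ leaves $2/3$ for three entries each at most $11/36$, and $3\cdot 11/36 = 33/36 < 24/36$? no $33/36>24/36$, fine) together with $\alpha,\beta$ on the boundary of $\S'_1$ with the perturbation $\varepsilon=\beta-\alpha$ aligned to make the ratio worst; verifying $48/49$... rather $24/49$ at that corner is the crux and will be a finite but delicate case analysis, which I would carry out by the same $\varepsilon^+/\varepsilon^-$ splitting and $\tau_k$-spread bound used in the proof of Lemma~\ref{lem:ineqb}, now with the strengthened constraint on $\gamma$ giving a strictly better spread bound than Claim~\ref{clm:tau}.
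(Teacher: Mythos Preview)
Your proposal has a genuine gap, and it stems from taking literally what is almost certainly a typo in Lemma~\ref{lem:ineqb4}: the denominator there should be $9\sum_{i}\alpha_i\gamma_i$, not $9\sum_{i}\alpha_i\beta_i$. This is clear once you recall that Lemma~\ref{lem:ineqb4} is just the $b=2$, $q=4$ instance of Lemma~\ref{lem:ineqb}, whose bound is $1/((q-1)^bA)$ with $A=\sum_j z_j\alpha_j$ and $z_j=\gamma_j$. With the correct inner product the needed lower bound is $A\geq 49/216$ (not your $24/49$; you want $\tfrac{1}{9A}\leq\tfrac{24}{49}$, i.e.\ $A\geq\tfrac{49}{216}$), and this is exactly what Lemma~\ref{lem:prodlb4} supplies when applied to the pair $(\alpha,\gamma)$. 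Since $\alpha\in\S'_1$ has every entry at least $1/6$, neither of the first two exceptional cases of Lemma~\ref{lem:prodlb4} can occur; since $\gamma$ has at most one entry equal to $1/3$, the third exceptional case (which would require $\gamma$ to be a permutation of $(1/3,1/3,1/6,1/6)$) is also excluded. Hence $\sum_i\alpha_i\gamma_i\geq 49/216$, and Lemma~\ref{lem:ineqb4} gives the factor $24/49$ immediately. This is the paper's one-line proof.

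All of your exploratory work---the attempt to lower-bound $\sum_i\alpha_i\beta_i$ for coupled $\alpha,\beta$, the two-step contraction, the coordinatewise optimization, the proposed refinement of Claim~\ref{clm:tau}---is chasing the wrong quantity. In particular, the hypothesis on $\gamma$ cannot possibly enter through $\sum_i\alpha_i\beta_i$, which is why you found yourself stuck; it enters exactly by killing the last equality case in Lemma~\ref{lem:prodlb4} for $\sum_i\alpha_i\gamma_i$. (As a sanity check that the stated form of Lemma~\ref{lem:ineqb4} is indeed a typo: take $\gamma=(0,1/3,1/3,1/3)\in\S_2$ and coupled $\alpha=(1/3,11/36,7/36,6/36)$, $\beta=(1/3,6/36,7/36,11/36)$; a direct computation gives $\|f(\alpha,\gamma)-f(\beta,\gamma)\|_1=\tfrac12\|\alpha-\beta\|_1$, while $1/(9\sum_i\alpha_i\beta_i)=144/325<1/2$, so the inequality with $\alpha_i\beta_i$ is false.)
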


We first show how Lemma~\ref{lem:contract1} follows from Lemma~\ref{lem:ineqb4} and Lemma~\ref{lem:sineq3} and then prove Lemma~\ref{lem:sineq3}.
\begin{proof}[Proof of Lemma~\ref{lem:contract1}]
If $w \in U$ then from the assumption $\mathrm{dist}(w,\Delta) \geq 3$ we have
$\sigma_U(w)=\phi_U(w)$ and hence $\zeta = \eta$.
From now on we assume that $w \not\in U$ and thus $\zeta=f(\alpha,\alpha')$
and $\eta=f(\beta,\beta')$.

We will now show that $\alpha$ and $\beta$ are coupled. If $u \in U$, we have $\alpha=\beta$ (this follows from
$\sigma_U(u)=\phi_U(u)$, which is true since $\mathrm{dist}(u,\Delta)\geq 2$). Now assume $u \not\in U$.
Suppose $\alpha_i=1/3$ for  $i \in [4]$. By the definition of $f$ in~(\ref{eq:mproc}), we know that at least one child, say $v$,
of $u$ has color $i$ in $\sigma_U$. Note that $\mathrm{dist}(v,\Delta)\geq 1$ and hence
$\sigma_U(v)=\phi_U(v)$ which implies $\beta_i=1/3$. Hence $\alpha$ and $\beta$ are coupled.
The same argument yields that $\alpha'$ and $\beta'$ are coupled.

If $\alpha'=\beta'$ and $\alpha \neq \beta$, then $\alpha,\beta \in \S'_1$. Hence~(\ref{ehe}) follows from Lemma~\ref{lem:ineqb4} and Lemma~\ref{lem:prod} as we have
$$
\|f(\alpha,\alpha')-f(\beta,\alpha')\|_1 \leq \frac{1}{2}\|\alpha-\beta\|_1 \leq \frac{48}{49}\|\alpha-\beta\|_1 \leq \frac{48}{49}\max\{\|\alpha-\beta\|_1,\|\alpha'-\beta'\|_1\}.
$$
The same argument applies if $\alpha=\beta$ and $\alpha'\neq \beta'$, and hence from now on we assume $\alpha \neq \beta$ and $\alpha' \neq \beta'$.

We next claim that if one of $\alpha,\beta$ has two or more entries of value $1/3$, then $\alpha=\beta$.
By the previous paragraph, $\alpha$ and $\beta$ have value $1/3$ in the same entries (they are coupled). By~(\ref{4mp3})
they are either permutations of $(0,1/3,1/3,1/3)$ or $(1/6,1/6,1/3,1/3)$, and in both cases we have
$\alpha=\beta$ (using the fact that $\alpha$ and $\beta$ have value $1/3$ in the same entries).
The same argument applies to $\alpha'$ and~$\beta'$.

Now we can assume that each of $\alpha,\beta$ has most one entry of $1/3$ (otherwise, by the previous paragraph,
$\alpha=\beta$, a case that we already covered). Similarly each of $\alpha',\beta'$ has most one entry of $1/3$. Using triangle
inequality and Lemma~\ref{lem:sineq3} we obtain
\begin{equation*}
\begin{split}
 \|f(\alpha,\alpha')-f(\beta,\beta')\|_1 \leq \|f(\alpha,\alpha')-f(\alpha,\beta')\|_1 + \|f(\beta,\beta')-f(\alpha,\beta')\|_1 \leq\\
\frac{24}{49}\|\alpha-\beta\|_1 + \frac{24}{49}\|\alpha'-\beta'\|_1 \leq \frac{48}{49} \max \{ \|\alpha-\beta\|_1, \|\alpha'-\beta'\|_1 \}.
\end{split}
\end{equation*}
\end{proof}

Before we prove Lemma~\ref{lem:sineq3}, we need the following strengthening of Lemma~\ref{lem:prod}.
\begin{lemma}\label{lem:prodlb4}
Let $\gamma,\xi \in \S_1' \cup \S_2$. Then
either
\begin{equation}\label{eq:llb1}
\sum_{i=1}^4 \gamma_i \xi_i = 2/9,
\end{equation}
or $\sum_{i=1}^4 \gamma_i \xi_i \geq 49/216 > 2/9$,
where~(\ref{eq:llb1}) is attained only when
\begin{itemize}
\item $\gamma=(0,1/3,1/3,1/3)^\pi$ and $\xi=(1/3,\xi_2,\xi_3,\xi_4)^\pi$, or
\item $\xi=(0,1/3,1/3,1/3)^\pi$ and $\gamma=(1/3,\gamma_2,\gamma_3,\gamma_4)^\pi$, or
\item $\gamma=(1/6,1/6,1/3,1/3)^\pi$ and $\xi=(1/3,1/3,1/6,1/6)^\pi$,
\end{itemize}
where $\pi$ is a permutation of~$[4]$.
\end{lemma}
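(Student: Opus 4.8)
The plan is to work with $a_i := 1/3 - \gamma_i$ and $b_i := 1/3 - \xi_i$. Since $\gamma,\xi \in \S_1$ we have $\sum_i \gamma_i = \sum_i \xi_i = 1$, so $a_i,b_i \ge 0$ and $\sum_i a_i = \sum_i b_i = 1/3$. Expanding the product and using these two identities gives
$$
\sum_{i=1}^4 \gamma_i \xi_i = \frac{4}{9} - \frac{1}{3}\sum_i a_i - \frac{1}{3}\sum_i b_i + \sum_{i} a_i b_i = \frac{2}{9} + \sum_{i=1}^4 a_i b_i ,
$$
which reduces everything to understanding the nonnegative quantity $\sum_i a_i b_i$. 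Membership of $\gamma$ in $\S_1'\cup\S_2$ pins down the $a_i$: by \eqref{4mp1}--\eqref{4mp2}, if $\gamma\in\S_1'$ then either $a_i = 0$ (when $\gamma_i = 1/3$) or $a_i \in [1/36,1/6]$ (when $1/6 \le \gamma_i \le 11/36$), and if $\gamma\in\S_2$ then $a_i\in\{0,1/3\}$. Also \eqref{4mp1} forbids three coordinates equal to $1/3$ in $\S_1'$, and \eqref{4mp3} forces a vector of $\S_1'$ with exactly two such coordinates to be a permutation of $(1/6,1/6,1/3,1/3)$, while any vector with three coordinates equal to $1/3$ lies in $\S_2$. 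The same holds for $\xi$; write $I_\gamma=\{i:\gamma_i=1/3\}$ and $I_\xi=\{i:\xi_i=1/3\}$.

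Since $a_i,b_i\ge 0$, we have $\sum_i a_ib_i = 0$ exactly when $I_\gamma\cup I_\xi=[4]$, and then $\sum_i\gamma_i\xi_i = 2/9$. For this equality case I would enumerate $(|I_\gamma|,|I_\xi|)$ subject to $|I_\gamma|+|I_\xi|\ge 4$ and $|I_\gamma|,|I_\xi|\le 3$: if $|I_\gamma|=3$ (so $\gamma\in\S_2$), then $I_\gamma\cup I_\xi=[4]$ says $\xi$ takes value $1/3$ in the coordinate where $\gamma$ is $0$, the first listed family; $|I_\xi|=3$ gives the second; and $|I_\gamma|=|I_\xi|=2$ with $I_\gamma\cap I_\xi=\emptyset$, together with the permutation-of-$(1/6,1/6,1/3,1/3)$ structure of both vectors, gives the third. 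Every other combination either has $|I_\gamma|+|I_\xi|<4$ (so $I_\gamma\cup I_\xi\neq[4]$) or reduces to one of these three. This confirms that $2/9$ is attained exactly in the three stated cases.

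For the strict inequality, assume $I_\gamma\cup I_\xi\neq[4]$ and put $T=[4]\setminus(I_\gamma\cup I_\xi)\neq\emptyset$, so that $\sum_i a_ib_i=\sum_{i\in T}a_ib_i$; it suffices to prove this is $\ge 1/216$, since then $\sum_i\gamma_i\xi_i\ge 2/9+1/216=49/216$. If $\gamma\in\S_2$, then $T$ is the single coordinate $j$ with $\gamma_j=0$, $a_j=1/3$, and $b_j\ge 1/36$ (as $\xi_j\neq 1/3$), so $\sum_{i\in T}a_ib_i=b_j/3\ge 1/108$; symmetrically if $\xi\in\S_2$. If $I_\xi=\emptyset$ then every $b_i\ge 1/36$, so $\sum_{i\in T}a_ib_i=\sum_i a_ib_i\ge\tfrac1{36}\sum_i a_i=1/108$, and symmetrically for $I_\gamma=\emptyset$. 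In the remaining cases $|I_\gamma|,|I_\xi|\in\{1,2\}$; then for $i\in T$ we have $a_i=1/6$ when $|I_\gamma|=2$ and $a_i\in[1/36,1/6]$ when $|I_\gamma|=1$, and likewise for $b_i$, and a coordinate count (using that we are in the bad case) gives $|T|\ge 1$ always and $|T|\ge 2$ when $|I_\gamma|=|I_\xi|=1$. When $|I_\gamma|=2$ (or $|I_\xi|=2$): if $|T|=1$ then $\sum_{i\in T}a_ib_i=\tfrac16 b_{i_0}\ge\tfrac16\cdot\tfrac1{36}=\tfrac1{216}$ (the only place where equality can occur), and if $|T|=2$ then $T=\bar I_\gamma$ and $\sum_{i\in T}b_i\ge 1/6$ (it equals $1/3$ minus the $b$-value of the at most one further non-$1/3$ coordinate of $\xi$, which is $\le 1/6$), so $\sum_{i\in T}a_ib_i=\tfrac16\sum_{i\in T}b_i\ge\tfrac1{36}$. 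When $|I_\gamma|=|I_\xi|=1$: if $I_\gamma=I_\xi$ then $T=\bar I_\gamma$, $\sum_{i\in T}a_i=1/3$, each $b_i\ge 1/36$, so $\sum_{i\in T}a_ib_i\ge\tfrac1{36}\cdot\tfrac13=\tfrac1{108}$; and if $I_\gamma\neq I_\xi$ then $|T|=2$, and using $a_i\le 1/6$ with $\sum_i a_i=1/3$ (and the same for $b$) forces $a_1+a_2\ge 1/6$ and $b_1+b_2\ge 1/6$, after which a two-variable minimization of $a_1b_1+a_2b_2$ over $a_i,b_i\in[1/36,1/6]$ subject to those sum constraints yields the value $5/648>1/216$.

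The main obstacle is precisely this last subcase, $|I_\gamma|=|I_\xi|=1$ with $I_\gamma\neq I_\xi$: there $|T|=2$ but the naive term-by-term bound $|T|\cdot(1/36)^2 = 2/1296$ falls below $1/216=6/1296$, so one must genuinely combine the upper bound $11/36$ on non-extreme coordinates (via $a_i,b_i\ge 1/36$) with the simplex constraint to obtain $a_1+a_2\ge 1/6$ and $b_1+b_2\ge 1/6$, and then carry out the elementary but not one-line minimization of the bilinear form. Everywhere else the estimates are comfortable, and the overall minimum of $\sum_i a_ib_i$ over all ``bad'' pairs comes out to exactly $1/216$, which is what makes the threshold $49/216$ sharp.
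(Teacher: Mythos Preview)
Your proof is correct and proceeds by a genuinely different organizing idea than the paper's. The paper attacks $\sum_i\gamma_i\xi_i$ directly and splits into three structural cases (one of $\gamma,\xi$ in $\S_2$; one of them a permutation of $(1/6,1/6,1/3,1/3)$; both with at most one $1/3$), bounding the sum from below in each by an ad hoc extremal argument. You instead introduce the shift $a_i=1/3-\gamma_i$, $b_i=1/3-\xi_i$ and use the identity $\sum_i\gamma_i\xi_i=2/9+\sum_i a_ib_i$, which immediately explains the baseline $2/9$, reduces the equality characterization to the transparent condition $I_\gamma\cup I_\xi=[4]$, and turns the strict case into the single target $\sum_{i\in T}a_ib_i\ge 1/216$. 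Your subsequent case split on $(|I_\gamma|,|I_\xi|)$ is finer than the paper's, but the individual estimates are elementary and all check out; in particular your last subcase ($|I_\gamma|=|I_\xi|=1$, $I_\gamma\ne I_\xi$) yields exactly $5/648$, matching the paper's Case~3 bound $149/648=2/9+5/648$, and your $|I_\gamma|=2$, $|T|=1$ subcase is the unique place giving the sharp value $1/216$, matching the paper's Case~2 where $49/216$ is attained.

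What each buys: your reformulation makes the constant $49/216=2/9+1/216$ conceptually clear and the equality cases fall out of a support argument rather than a separate minimization; the paper's version is shorter on the page because it avoids the finer $(|I_\gamma|,|I_\xi|)$ enumeration and handles the hardest case (both vectors with at most one $1/3$) in one stroke by choosing the extremal $\xi=(1/3,11/36,7/36,1/6)$. The one place in your write-up that deserves a full line rather than a phrase is the two-variable minimization of $a_1b_1+a_2b_2$ on $[1/36,1/6]^2$ with $a_1+a_2\ge 1/6$, $b_1+b_2\ge 1/6$: it is elementary (bilinear in each variable, so the minimum is at a vertex, and one checks the four corners after fixing $a_1+a_2=b_1+b_2=1/6$), but since it is precisely the subcase where the naive termwise bound fails, spelling it out would strengthen the exposition.
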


\begin{proof}
There are three cases depending on the numbers of $1/3$ in $\gamma$ and $\xi$.
\begin{itemize}
\item Case: $\gamma\in\S_2$ or $\xi\in\S_2$. We assume, w.l.o.g., $\gamma=(0,1/3,1/3,1/3)$. We have
$$
\sum_{i=1}^4 \gamma_i \xi_i = (1-\xi_1)/3 \geq 2/9,
$$
where the last inequality is attained only when $\xi_1=1/3$.
If $\sum_{i=1}^4 \gamma_i \xi_i \neq 2/9$, we have $\xi_1 \neq 1/3$, and by~(\ref{4mp2}) we have $\xi_1\leq 11/36$.
Hence
\begin{equation}\label{eq:elb1}
\sum_{i=1}^4 \gamma_i \xi_i = (1-\xi_1)/3 \geq 25/108 > 49/216.
\end{equation}
\item Case: $\gamma=(1/6,1/6,1/3,1/3)^\pi$ and $\xi \in \S_1'$, or $\xi=(1/6,1/6,1/3,1/3)^\pi$ and $\gamma \in \S_1'$,
where $\pi$ is a permutation of~$[4]$. We assume, w.l.o.g., $\gamma=(1/6,1/6,1/3,1/3)$ and $\xi \in \S_1'$. We have
$$
\sum_{i=1}^4 \gamma_i \xi_i = (\xi_1+\xi_2)/6+(\xi_3+\xi_4)/3 = 1/3 - (\xi_1+\xi_2)/6 \geq 2/9,
$$
where the last inequality is attained only when $\xi_1=\xi_2=1/3$. If $\sum_{i=1}^4 \gamma_i \xi_i \neq 2/9$, we have $\xi_1 \neq 1/3$ or $\xi_2 \neq 1/3$. By the definition of $\S_1'$, we have
\begin{equation}\label{eq:elb2}
\sum_{i=1}^4 \gamma_i \xi_i = 1/3 - (\xi_1+\xi_2)/6 \geq \frac{1}{3}-\frac{1}{6}\cdot\left(\frac{11}{36}+\frac{1}{3}\right)=\frac{49}{216}.
\end{equation}
\item Case: both $\gamma$ and $\xi$ have at most one entry of $1/3$. We
assume, w.l.o.g., $\gamma_1 \leq \gamma_2 \leq \gamma_3 \leq \gamma_4$.
Then the minimum of $\sum_{i=1}^4 \gamma_i \xi_i$ over $\xi\in\S'_1$ is achieved for $\xi=(1/3,11/36,7/36,1/6)$,
where the first entry is made as big as possible, the second entry is made
as big as possible (subject to~(\ref{4mp2})), and the last entry is made as small
as possible (subject to~(\ref{4mp1})). We have
\begin{equation}\label{eq:elb3}
\sum_{i=1}^4 \gamma_i \xi_i \geq \frac{\gamma_1}{3}+\frac{11\gamma_2}{36}+\frac{7\gamma_3}{36}+\frac{\gamma_4}{6} \geq \frac{1}{3} \cdot \frac{1}{6}+\frac{11}{36}\cdot\frac{7}{36}+\frac{7}{36}\cdot\frac{11}{36}+\frac{1}{6}\cdot\frac{1}{3} = \frac{149}{648} > \frac{49}{216}.
\end{equation}
\end{itemize}
The claim then follows from~(\ref{eq:elb1}),~(\ref{eq:elb2}) and~(\ref{eq:elb3}).
\end{proof}

We next prove Lemma~\ref{lem:sineq3}.

\begin{proof}[Proof of Lemma~\ref{lem:sineq3}]
Lemma~\ref{lem:sineq3} follows from Lemma~\ref{lem:ineqb4} and Lemma~\ref{lem:prodlb4}.
\end{proof}

\bibliographystyle{plain}
\bibliography{bibfile}

\end{document}